\newcommand{\cmark}{\ding{51}}%
\newcommand{\xmark}{\ding{55}}%
\pgfplotsset{compat=newest}
\DeclareMathOperator*{\argmin}{\mathrm{arg\,min}}
\DeclareMathOperator{\tr}{tr}
\newcommand{\llangle}{\left\langle}
\newcommand{\rrangle}{\right\rangle}
\renewcommand{\vec}{\bm}
\newcommand{\mat}{\bm}
\newcommand{\tensor}{\bm}
\newcommand{\displacement}{\vec{u}_\mathrm{s}}
\newcommand{\bardisplacement}{\bar{\vec{u}}_\mathrm{s}}
\newcommand{\permeability}{\bm{k}_\mathrm{f}}
\newcommand{\rhos}{\rho_\mathrm{s}}
\newcommand{\rhof}{\rho_\mathrm{f}}
\newcommand{\fluidvelocity}{\bm{v}_\mathrm{f}}
\newcommand{\barfluidvelocity}{\bar{\bm{v}}_\mathrm{f}}
\newcommand{\porosity}{\phi_0}
\newcommand{\stress}{\tensor{\sigma}}
\newcommand{\g}{\vec{g}}
\newcommand{\f}{\vec{f}}
\newcommand{\n}{\vec{n}}
\newcommand{\NABLA}{\vec{\nabla}}
\newcommand{\DIV}{\NABLA\cdot}
\newcommand{\GRAD}{\NABLA}
\newcommand{\eps}[1]{\tensor{\varepsilon}(#1)}
\newcommand{\discreteenergy}{\mathcal{J}}
\def\spaceU{\bm{V}}
\def\spaceP{Q}
\def\spaceV{\bm{W}}
\def\testspaceU{\bm{V}}
\def\testspaceP{Q}
\def\testspaceV{\bm{W}}
\def\spaceUh{\bm{V}_h}
\def\spacePh{Q_h}
\def\spaceVh{\bm{W}_h}
\newcommand{\testu}{\bm{u}^\star}
\newcommand{\testp}{p^\star}
\newcommand{\testv}{\bm{v}^\star}
\def\FlowDirichletBoundary{\Gamma^D}
\def\MechanicsDirichletBoundary{\Gamma^D}
\newcommand{\incp}[1]{d_p^{#1}}
\newcommand{\incu}[1]{\bm{d}_{\bm{u}}^{#1}}
\newcommand{\incv}[1]{\bm{d}_{\bm{v}}^{#1}}
\newcommand{\solidstabilization}{\bm{\beta_\mathrm{s}}}
\newcommand{\fluidstabilization}{\bm{\beta}_\mathrm{f}}
\newcommand{\pressurestabilization}{\beta_\mathrm{p}}
\newcommand{\hatsolidstabilization}{\bm{\widehat{\beta}_\mathrm{s}}}
\newcommand{\hatpressurestabilization}{\widehat{\beta}_\mathrm{p}}
\newcommand{\weightedsquare}[2]{\llangle\!\llangle {#1} \rrangle\!\rrangle_{#2}^2}
\def\us{\bm{u}_\mathrm{s}}
\def\vs{\bm{v}_\mathrm{s}}
\def\vf{\bm{v}_\mathrm{f}}
\def\vv{\vs,\vf,m}
\def\dvv{\delta\vs,\delta\vf,\delta m}
\def\ovv{\overline{\vs},\overline{\vf},\overline{m}}
\def\Solid{\mathcal{S}}
\def\Fluid{\mathcal{F}}
\def\Mass{\mathcal{M}}
\def\Piola{\mathbf{P}}
\def\ff{\mathbf{f}}
\def\tt{\mathbf{t}}
\def\nn{\mathbf{n}}
\def\FF{\mathbf{F}}
\def\EE{\mathbf{E}}
\def\II{\mathbf{I}}
\def\Cauchy{\tensor\sigma}
\def\Cauchyf{\tensor\sigma_\mathrm{f}}
\def\Cauchys{\tensor\sigma_\mathrm{s}}
\def\Frech{\mathcal{D}}
\newtheorem{theorem}{Theorem}
\newtheorem{assumption}{Assumption}
\newtheorem{lemma}[theorem]{Lemma}
\newtheorem{remark}[theorem]{Remark}
\numberwithin{equation}{section}
\numberwithin{theorem}{section}
\newcommand*\samethanks[1][\value{footnote}]{\footnotemark[#1]}
\author{Jakub W.\ Both\thanks{The first two authors have equally contributed to the work.} \thanks{PMG, Department of Mathematics, University of Bergen, Norway}
\and Nicolas A.\ Barnafi\samethanks[1] \thanks{MOX, Department of Mathematics, Politecnico di Milano, Italy}
\and Florin A.\ Radu \samethanks[2]
\and Paolo Zunino \samethanks[3]
\and Alfio Quarteroni \samethanks[3]}
 \title{Iterative splitting schemes for a \\soft material poromechanics model}
\date{}
\begin{document}
\maketitle


 
\begin{abstract}
We address numerical solvers for a poromechanics model particularly adapted for soft materials, as it generally respects thermodynamics principles and energy balance. Considering the multi-physics nature of the problem, which involves solid and fluid species, interacting on the basis of mass balance and momentum conservation, we decide to adopt a solution strategy of the discrete problem based on iterative splitting schemes. As the model is similar (but not equivalent to) the Biot poromechanics problem, we follow the abundant literature for solvers of the latter equations, developing two approaches that resemble the well known undrained and fixed-stress splits for the Biot model. A thorough convergence analysis of the proposed schemes is performed. In particular, the undrained-like split is developed and analyzed in the framework of generalized gradient flows, whereas the fixed-stress-like split is understood as block-diagonal $L^2$-type stabilization and analyzed by means of a relative stability analysis. In addition, the application of Anderson acceleration is suggested, improving the robustness of the split schemes. Finally, we test these methods on different benchmark tests, and we also compare their performance with respect to a monolithic approach. Together with the theoretical analysis, the numerical examples provide guidelines to appropriately choose what split scheme shall be used to address realistic applications of the soft material poromechanics model.

\vspace{0.2cm}

\noindent
\textbf{Keywords.} poromechanics of soft materials; iterative splitting schemes; undrained split; fixed-stress split; convergence analysis\\[0.5cm]

\end{abstract}

%







\section{Introduction \label{section:introduction}}

Poromechanics addresses the behavior of fluid-saturated permeable porous materials, and in particular the interaction of their mechanical deformation and the fluid flow. Since its origin in the context of civil engineering \cite{Biot1941155,Biot1955182,Biot195691,Terzaghi1943}, most commonly known as Biot's theory of poroelasticity, it has been used for countless applications of societal and industrial relevance, e.g., in reservoir geomechanics, hydrology and soil mechanics, and material sciences (see the review \cite{deBoer2005} and the references therein). More recently, it has also captured the attention of researchers interested in the behavior of highly deformable, soft biological tissues \cite{Yang1991587,Taber1996323,Goriely2015931}; a prominent example is the perfusion of the heart \cite{Nash2000113,Chabiniok2016,Chapelle201091,di2020computational}.

As the classical theory of poroelasticity and resulting models were originally developed for civil applications, they are in general inadequate for biomechanics. This difficulty can be more evidently appreciated when considering soft tissues undergoing large deformations and perfusion with potentially moderate flow rates~\cite{Badia20097986,Bukac2015197,Armstrong2016,Zakerzadeh2019101}. Ultimately, this has called for more general formulations obeying the fundamental principles of continuum mechanics and thermodynamics~\cite{Coussy2004}, which renders these models applicable to a broader range of scenarios. 

Among various advances, we particularly highlight the development of a general, thermodynamically consistent poromechanics model by Chapelle and Moireau~\cite{Chapelle201482}, which also serves as basis for this work. The model is based on a thermodynamic derivation combined with thermodynamically consistent constitutive laws. It couples the balance of linear momentum for the solid and fluid phases including the viscous dissipation governed by the interaction of both phases due to friction, as well as the conservation of mass. Most importantly, in contrast to the classical quasi-static Biot consolidation model, the aforementioned model satisfies an energy-dissipation identity, predicting the dissipation of the combination of the kinetic and Helmholtz free energy. A further difference between the two approaches is that the former considers the absolute fluid velocity instead of the relative one.

The analysis of the well-posedness, stability and numerical approximation of this class of poromechanics models is still largely open. Among recent advances, we highlight the development and analysis of an implicit time discretization preserving the dissipation-energy identity at the discrete level~\cite{Chapelle201482}; an energy-preserving implicit-explicit time discretization incorporating a (non-iterative) operator splitting, decoupling solid and flow computations~\cite{Burtschell2017effective}; an energy-stable space and time discretization for a linearized model with focus on quasi-incompressible solids~\cite{Burtschell201928}; and finally, a space and time discretization for the same linearized model, exploiting a generalized saddle point structure and ultimately suggesting the use of Taylor-Hood type finite elements~\cite{BARNAFI2020}. 

Motivated by the success of block-partitioned solvers for the related, classical quasi-static Biot equations, the main objective of this work is to develop and analyze for the first time iterative coupling strategies for the general, thermodynamically consistent poromechanics model proposed in \cite{Chapelle201482}. 
Similar to previous theoretical works in this context, see for example \cite{Burtschell201928,BARNAFI2020}, a linearized model is considered for the numerical analysis.

In general, solvers decoupling different physics allow the employment of methods tailored to the separate sub-problems, as flow and elasticity. However, a sequential-implicit solution requires iterating until convergence at each time step. In contrast, fully-implicit approaches, solving the fully-coupled problem at once, yield  unconditional stability but require advanced and efficient preconditioners. Here it is worth to mention that robust iterative coupling strategies can effectively guide the design of scalable preconditioners for the monolithic solution by Krylov subspace methods. 

For robust iterative coupling, in general, a problem-specific strategy is required; yet, we can learn from the well-studied, related Biot equations. For the latter, solvers based upon a sequential-implicit solution of the flow and mechanics sub-problem have been studied since over two decades~\cite{Settari1998219}. The most popular iterative schemes are the undrained split \cite{zienkiewicz1988unconditionally} and the fixed-stress split \cite{Settari1998219}, both relying on additional stabilization to one of the sub-problems. Due to suitable choices of stabilization, both have been shown to be unconditionally stable \cite{Mikelic2013,Castelletto20151593,Both2017} with theoretical convergence rates depending on stabilization and model parameters, but independent on mesh properties; inf-sup stability of the discretization even allows robust convergence in the fluid-incompressible and quasi-impermeable regime~\cite{Storvik2019}. Moreover, the fixed-stress split has been successfully generalized to several complex extensions of the quasi-static Biot equations. In view of biomedical applications, we emphasize work on large deformations~\cite{Borregales2019}.
For optimal performance of the iterative solvers, the choice of the stabilization is well-known to be vital. This choice does depend on several factors~\cite{Storvik2019} as problem parameters, but also boundary conditions and geometry, which are difficult to quantify. To alleviate this, it has been shown in~\cite{Both2018} that Anderson acceleration~\cite{walker2011anderson} greatly relaxes the requirement of optimal stabilization. Furthermore, utilizing the fact that stabilized split schemes are equivalent to a preconditioned Richardson iteration \cite{Castelletto20151593}, they provide a basis to design efficient block preconditioners for the fully-implicit approach \cite{White201655}, next to alternative efficient preconditioners~\cite{Phoon2002377,Haga20111466,White2011647,lee2017parameter,adler2017robust}. In this context, the need for optimal stabilization is similarly relaxed. Hence, after all, stabilization parameters derived in theoretical analyses offer a practical choice.

In this work, we develop and analyze splitting schemes for a linearization of the general poromechanics model~\cite{Chapelle201482}, previously introduced and analyzed in~\cite{Burtschell201928,BARNAFI2020}. This (linearized) model resembles Biot's equations, but presents fundamental differences, most importantly, new terms in the momentum equations of the fluid and solid phases due to inertia, and a structurally different saddle-point structure, compared to a double saddle point structure of the Biot equations. Still, iterative coupling concepts can be adapted to the new setting. Ultimately, we present schemes similar to the undrained split and the fixed-stress split. In particular, the undrained-like split is developed and analyzed in the framework of generalized gradient flows and alternating minimization following~\cite{Both2019}, whereas the fixed-stress-like split is understood as block-diagonal $L^2$-type stabilization and analyzed by means of a relative stability analysis. In practice, additional application of Anderson acceleration is suggested, motivated by associated works in the literature and the here presented numerical examples.

This work is structured as follows. In Section \ref{section:model}, we present the general model of interest and its linearized version. In Section \ref{section:undrained-split} and Section \ref{section:fixed-stress}, we present respectively the alternating minimization split and the diagonally $L^2$--stabilized split. The convergence of both schemes is analyzed in Section \ref{section:analysis}. In Section \ref{section:numerical-tests}, an extensive numerical study is presented which validates the theoretical results. Finally, we close with concluding remarks in Section~\ref{section:conclusion}.

\section{The thermodynamically consistent poromechanics model}\label{section:model}


The purpose of this work is to develop efficient solution strategies for the linearized and discretized version of the thermodynamically consistent poromechanics model originally developed by Chapelle and Moireau in \cite{Chapelle201482}, further described below. Two main steps are essential to reach this objective. One is the discretization of the equations (in this work we consider finite difference schemes in time and finite elements for the space discretization) and the other is the linearization of the model through a Newton-Raphson method. It is natural to operate  the linearization on the discrete version of the problem, obtaining a discrete tangent problem to which the solution strategies proposed later on will be applied. This can be named the \emph{discretize then linearize} strategy.

We remark that in the definition of the tangent problem the shape derivatives are neglected, namely the physical domain $\Omega_t$ is identified with the reference one $\Omega_0$. As in this case the tangent problem depends only on the Fr\'ech\'et derivatives of the mathematical operators that govern the nonlinear problem, the discrete tangent problem obtained by means of the discretize then linearize approach is equivalent to the one that would be derived from the \emph{linearize then discretize} strategy. The latter strategy corresponds to address the linearization of the continuous problem first, giving rise to a fully continuous tangent problem. Then, we address the numerical discretization of such problem and we develop the numerical solvers for it, based on the splitting into several sub-problems. 
We choose to follow the latter approach, because it is much simpler as it allows us to work with the strong formulation of the equations.


\subsection{The general model for finite deformations}

The model assumes that two phases, a fluid ($f$) and a solid ($s$), coexist at each point of the domain of interest. Let us denote by $\phi$ the volume fraction of the fluid. We use Lagrangian (reference) and Eulerian (physical) coordinate frames, denoting by $\Omega_0$ the domain in the Lagrangian frame and by $\Omega_t$ the same domain in the deformed configuration. In the same way, we denote with the subindex ($0$) the operators defined in the Lagrangian frame. For example, given the displacement field in the Lagrangian frame, namely $\us(x_0,t)$ such that $x = x_0 + \us(x_0,t)$ for any $x\in\Omega_t,\,x_0\in\Omega_0$, the deformation gradient tensor is $\FF = \II + \nabla_0 \us$ and its determinant is $J = \mathrm{det} \FF$. We also introduce the symbol $J_s = J(1-\phi)$. One of the primary variables of the model is the added mass $m=\rho_\mathrm{f}(J
\phi-\phi_0)$ that is the fluid mass added to the system due to pore deformation. To define the constitutive laws of the model, we introduce $\Psi(\FF,J_s)$ which is a suitable free energy of the solid.

In view of the linearization of the problem, we formulate the equations on the following abstract form: \textit{Find the velocity of the solid phase $\vs$, the velocity of the fluid phase $\vf$ and the added mass (per unit volume) $m$, such that}
\begin{equation*}
    \Solid(\vv)=0;\quad \Fluid(\vv)=0;\quad \Mass(\vv)=0;
\end{equation*}
where the operators $\Solid(\cdot),\ \Fluid(\cdot),\ \Mass(\cdot)$ correspond to the momentum conservation in the solid and fluid phases, and the mass balance, respectively. More precisely, referring to the strong formulation of the model presented in \cite{Burtschell201928}, the operators $\Solid(\cdot),\ \Fluid(\cdot),\ \Mass(\cdot)$ correspond to the following sub-problems:

\begin{description}
\item Given $\vf,\,m$ and $\ff$ in $\Omega_0$, find $\vs$ in $\Omega_0$ such that
\begin{equation*}
    \Solid(\vv)=\rhos(1-\phi_0)\frac{\partial\vs}{\partial t} - \nabla_0 \cdot \Piola_s + (1-\phi) J \FF^{-T} \nabla_0 p - J\phi^2\permeability^{-1}(\vf-\vs) - \rhos(1-\phi_0) \ff =0
\end{equation*}
complemented by the following constitutive laws
\begin{equation*}
    \Piola_s = \frac{\partial \Psi(\FF,J_s)}{\partial \FF}, \quad p = \frac{\partial \Psi(\FF,J_s)}{\partial J_s}.
\end{equation*}
\item Given $\vs,\,m$ and $\ff$ in $\Omega_t$, find $\vf$ in $\Omega_t$ such that
\begin{align*}
    \Fluid(\vv) &= \frac{1}{J}\frac{d}{dt}(\rho_\mathrm{f} J \phi \vf) + \nabla\cdot \left(\rho_\mathrm{f} \phi \vf \otimes \rho_\mathrm{f} (\vf-\vs)\right) - \nabla \cdot (\phi \Cauchy_f) - \theta \vf + \phi \nabla p \\
    &\quad + \phi^2\permeability^{-1}(\vf-\vs) - \rho_\mathrm{f} \phi \ff =0.
\end{align*}
\item Given $\vs,\,\vf$ in $\Omega_t$ find $m$ in $\Omega_t$ such that
\begin{equation*}
    \Mass(\vv) = \frac{1}{J}\frac{dm}{dt} + \nabla \cdot (\rho_\mathrm{f} \phi (\vf-\vs)) - \theta = 0\,.
\end{equation*}
\end{description}

Here, $\rhos$ and $\rhof$ constitute (spatially and temporally) constant densities of the solid and fluid phases, respectively, $\permeability$ denotes the fluid mobility (absolute permeability divided by the fluid viscosity). Potentially, $\porosity$, $\permeability$, and $\kappa_\mathrm{s}$ are spatially varying, and the source $\f$ is varying in space and time. 

The problem must be complemented by boundary and initial conditions.
For the boundary constraints many options are possible, as discussed for example in \cite{Burtschell201928}.
For the sake of simplicity, we present here only one of the possible variants. 
Let us split the whole boundary $\partial\Omega_t$ into two distinct non-intersecting parts, $\Gamma^D_t$ and $\Gamma^N_t$,
where we enforce Dirichlet and Neumann type conditions, respectively. 
Let $\vs^D,\,\vf^D,\,\tt$, be assigned velocities and traction for boundary conditions,
and let $\vs^0,\vf^0$ be the assigned initial values, under the assumption that $\Omega_t=\Omega_0$ at $t=0$,
We define the boundary and initial conditions as follows,
\begin{alignat*}{4}
    \vs &= \vs^D  && \text{on} &\ & \Gamma^D_0 &&\times (0,T),
    \\
    \vf &= \vf^D  && \text{on} && \Gamma^D_t &&\times (0,T),
    \\
    (\Piola_s - (1-\phi) pJ\FF^{-T}) \nn_0 &= \tt_0 &\qquad & \text{on} && \Gamma^N_0 &&\times (0,T),
    \\
    \phi(\Cauchyf - p\II) \nn &= \tt && \text{on} && \Gamma^N_t &&\times (0,T),
    \\
    \vs &= \vs^0 && \text{in} && \Omega_0 &&\times \{0\},
    \\
    \vf &= \vf^0 && \text{in} && \Omega_0 &&\times \{0\},
    \\
    m &= 0  && \text{in} && \Omega_0 &&\times \{0\}.
\end{alignat*}

\subsection{Derivation of the tangent problem}

Using the previous abstract form of the problem, we formally derive the tangent problem. 
To this purpose, we denote by $\Frech_u{\mathcal{A}}$
the derivative of a generic operator $\mathcal{A}$ with respect to the field $u$. 
We point out that such derivative should account for the classical Fr\'ech\'et derivative of the operator, 
combined with the shape derivatives due to deformations of the domain. 
The central hypothesis in the definition of the tangent problem is that we neglect the shape derivatives, 
limiting ourselves to account for the Fr\'ech\'et ones.
In other words, we identify the physical domain, $\Omega_t$, with the reference one, $\Omega_0$
(and for simplicity we drop the subindices $0,t$, denoting $\Omega_t$ and $\Omega_0$ both by $\Omega$).
In this setting, the nonlinear problem is approximated, at the point $\ovv$, by the following linear problem, called \emph{the tangent problem}:
given $\ovv$, such that the boundary and initial conditions of the nonlinear problem are satisfied, calculate $\dvv$, solution of the following system of linear equations,
\begin{equation*}
    \begin{bmatrix}
    \Frech_{\vs}\Solid & \Frech_{\vf}\Solid & \Frech_{m}\Solid\\
    \Frech_{\vs}\Fluid & \Frech_{\vf}\Fluid & \Frech_{m}\Fluid\\
    \Frech_{\vs}\Mass & \Frech_{\vf}\Mass & \Frech_{m}\Mass
    \end{bmatrix}
    \begin{bmatrix}
    \delta \vs \\ \delta \vf \\ \delta m
    \end{bmatrix}
    =
    -\begin{bmatrix}
    \Solid(\ovv) \\ \Fluid(\ovv) \\ \Mass(\ovv)
    \end{bmatrix},
\end{equation*}
where $\Frech_{\vs}\Solid$ etc.\ represent Fr\'ech\'et derivatives at the point $\ovv$,
and the system must be solved using boundary and initial conditions of the same type of the nonlinear problem, but with homogeneous (null) data.


In \cite{Burtschell201928} an approximate yet explicit expression of the tangent problem is provided.
More precisely, the nonlinear problem is linearized around the configuration at rest,
namely $\ovv=0$. As a result we have $\overline {m}=0$ and $\overline{\phi}=\phi_0 \neq 0$.
Concerning the fluid phase, Newtonian and incompressible behavior is assumed, which yields $\stress_f(\vf) = 2\mu_\mathrm{f} \eps{\vf}$, being $\eps{\boldsymbol v}=\frac12 (\nabla {\boldsymbol v} + \nabla {\boldsymbol v}^T)$ the symmetric deformation gradient.
As in \cite{Burtschell201928}, we denote by $\vs,\,\vf,\,m$ the increments with respect to such state and use an additive decomposition of the free energy, with a Saint-Venant Kirchhoff component for the mechanics and a quadratic potential for the volumetric deformation of the solid phase $J_s$, which reads
\begin{align*}
    \Psi(\FF, J_s) = \frac \lambda 2(\tr{\EE})^2 + \mu\EE:\EE +  \frac{\kappa_\mathrm{s}}{2}\left(\frac{J_s}{1-\phi_0}-1\right)^2,
\end{align*}
where $\EE = \frac{1}{2} \left(\FF^T + \FF + \FF^T\FF\right)$ denotes the Green-Lagrangian strain tensor, also $\mu, \lambda$ are the Lam\'e constants and $\kappa_\mathrm{s}$ is the bulk modulus. Under small deformations we have that $\EE\approx \eps{\us}$ and $J\approx 1 + \DIV\us$, which give
\begin{align*}
    \Piola_s &= \frac{\partial \Psi}{\partial \FF} \approx \Cauchys(\us) =  \mathbb C \eps{\us} = \lambda \tr{\eps \us} + 2\mu \eps \us,\\
    p &= \frac{\partial \Psi}{\partial J_s} \approx \frac{\kappa_\mathrm{s}}{(1-\phi_0)^2}\left(\frac {m}{\rhof} - \DIV\,\us\right), 
\end{align*}
where $\mathbb C$ is a fourth order constant tensor (symmetric, positive definite), known as Hooke tensor. In the linearized setting it is possible to reformulate the problem in terms of the (more commonly used) variable $p$ instead of the added mass.
As a result, the approximate tangent problem for the configuration at rest reads as follows:
\textit{find $\us,\vf,p$ such that}

\begin{subequations}
\label{linearized-model}
\begin{alignat}{2}
 \rhos(1-\porosity) \partial_{tt} \displacement - \DIV\, \Cauchys(\displacement) + (1-\porosity) \GRAD p - \porosity^2 \permeability^{-1} (\fluidvelocity - \partial_t \displacement) &= \rhos(1-\porosity) \f,
 \\[8pt]
 \rhof\porosity \partial_{t} \fluidvelocity - \DIV\, \left(\porosity \stress_\mathrm{f}(\fluidvelocity) \right) - \theta \vf + \porosity\GRAD p + \porosity^2 \permeability^{-1} (\fluidvelocity - \partial_t \displacement) &= \rhof\porosity \f,
\\[3pt]
 \frac{\rhof (1-\porosity)^2}{\kappa_\mathrm{s}} \partial_t p + \DIV\, \left(\rhof \porosity \fluidvelocity\right) + \DIV\,\left( \rhof \left(1-\porosity\right) \partial_t \displacement \right) &= \theta.
\end{alignat}
\end{subequations} 
For simplicity, in what follows we assume $\theta = 0$. The system~\eqref{linearized-model} is closed with appropriate boundary conditions
naturally following from the ones of the nonlinear problem. For the sake of clarity we report them here
\begin{subequations}
\begin{alignat}{4}
  \displacement &= \us^D	&\quad&\text{on }&&\Gamma^D&&\times(0,T), 
  \label{model:bc:mechanics-dirichlet}\\
    \fluidvelocity &= \bm{v}^D	&&\text{on }&&\Gamma^D&&\times(0,T), \label{model:bc:fluid-dirichlet}\\
  \left(\mathbb{C} \eps{\displacement}- (1-\porosity)p \mathbf{I} \right)\n &= \tt&&\text{on }&&\Gamma^N&&\times(0,T), 
  \label{model:bc:mechanics-neumann}\\
  \porosity \left(\stress_\mathrm{f}(\fluidvelocity) - p \mathbf{I}\right) \cdot \n &= \tt &&\text{on }&&\Gamma^N&&\times(0,T), \label{model:bc:fluid-neumann}\\
  \displacement  & =\bm{u}_s^0 &\quad&\text{in }&&\Omega&&\times\{0\}, \label{model:initial:mechanics-displacement}\\
  \partial_t \displacement &= \bm{v}_s^0 &&\text{in }&&\Omega&&\times\{0\}, \label{model:initial:mechanics-velocity}\\
  \fluidvelocity &= \bm{v}_f^0 && \text{in }&&\Omega &&\times \{ 0 \}, \label{model:initial:flow-velocity}\\
  p &= p_0, 	&&\text{in }&&\Omega&&\times\{0\}. \label{model:initial:flow-pressure}  
\end{alignat}
\end{subequations}

\subsection{Numerical approximation of the tangent problem}

We start form the time discretization, based on a simple backward Euler approach.
We will discuss later on how higher order time discretizations are also viable
and the resulting discrete problem maintains its fundamental traits,
such that the numerical solvers developed in what follows will still be applicable.

We consider a partition of the time interval of interest $(0,T)$, given by $0=t_0 < t_1 < ... < t_n < ... < t_N=T$ with, for simplicity, constant time step size $\Delta t \,= t_n - t_{n-1}$. The temporal derivatives within the model~\eqref{linearized-model} are approximated by finite differences
\begin{equation*}
 \partial_{t} \displacement(t_n)  \approx \frac{\displacement^n - \displacement^{n-1}}{\Delta t},\quad
 \partial_{tt} \displacement(t_n) \approx \frac{\displacement^n - 2\displacement^{n-1} + \displacement^{n-2}}{\Delta t^2},\quad
 \partial_t \fluidvelocity(t_n) \approx \frac{\fluidvelocity^n - \fluidvelocity^{n-1}}{\Delta t}.
\end{equation*}
We assume that besides the initial data the first time step has been already determined. From the second time step the fully dynamic linearized model can then be approximated by the Implicit Euler discretization using the above finite difference approximations: \textit{For $n\geq 2$, given $\displacement^{n-1},\displacement^{n-2}, p^{n-1},\fluidvelocity^{n-1}$, find $\displacement^n, p^n, \fluidvelocity^n$ such that}

\begin{subequations}
\label{semi-discrete-linearized-model}
\begin{align}
 \nonumber
 &\rhos(1-\porosity)\frac{\displacement^n - 2 \displacement^{n-1} + \displacement^{n-2}}{\Delta t^2} - \DIV\, \Cauchys(\displacement^n) + (1-\porosity) \GRAD p^n
 - \porosity^2 \permeability^{-1} \left(\fluidvelocity^n - \frac{\displacement^n - \displacement^{n-1}}{\Delta t} \right) \\
 &\qquad = \rhos (1-\porosity)\f^n,\\[8pt]
 &\rhof\porosity\frac{\fluidvelocity^n - \fluidvelocity^{n-1}}{\Delta t} - \DIV\,\left( \porosity \Cauchyf(\fluidvelocity^n) \right) + \porosity\GRAD p^n 
 + \porosity^2 \permeability^{-1} \left(\fluidvelocity^n - \frac{\displacement^n - \displacement^{n-1}}{\Delta t} \right) =  \rhof\porosity \f^n,\hspace{1cm}& \\[8pt]
 &\frac{(1-\porosity)^2}{\kappa_\mathrm{s}} \frac{p^n - p^{n-1}}{\Delta t} + \DIV\, \left( \porosity \fluidvelocity^n\right) + \DIV\,\left( \left(1-\porosity\right) \frac{\displacement^n - \displacement^{n-1}}{\Delta t}\right) = 0.
\end{align}
\end{subequations}

Such problem must satisfy the same boundary conditions of \eqref{linearized-model} at each time $t_n$,
where $\f^n$, $\bm{u}_\mathrm{s}^{D,n}$ etc. denote suitable approximations of the external problem data at time $t_n$.
In what follows we will apply the lifting technique to nonhomogeneous Dirichlet boundary data
(in other words, a change of variable is introduced, by subtracting from the solution a function that is regular enough and equal to the prescribed datum on the boundary, such that the problem for the new variable is transformed into a standard homogeneous Dirichlet-type problem).
In this way, all the forcing terms of the problem (volume forces and surface forces/data) 
will be implicitly represented in the volume term $\f^n$ without significant loss of generality.
Initial conditions are prescribed as in~\eqref{model:initial:mechanics-displacement}-\eqref{model:initial:flow-pressure} by suitably approximating the initial data.
Finally, we stress that the mass conservation equation has been divided by the constant fluid density $\rhof$ in order to highlight an apparent symmetry between the equations.

\begin{remark}[Higher order time discretization]
 Applying alternative diagonally implicit Runge-Kutta schemes results in coupled systems of governing equation of similar type as~\eqref{semi-discrete-linearized-model}. Material parameters possibly have to be scaled appropriately, and the right hand side source terms may then also include further previous data. 
 However, we stress that the analysis of the splitting in this work does not depend on the choice of the time discretization similarly as in~\cite{Bause2019}
 and it could possibly be used also in the framework of space-time finite elements, used for example in \cite{BAUSE2017745} for the approximation of Biot poroelasticity system.
\end{remark}


Let $\spaceU$, $\spaceV$, $\spaceP$ denote suitable function spaces for the solid displacement, fluid velocity, and fluid pressure, respectively, at discrete time $t^n$, incorporating in particular homogeneous essential boundary conditions on the relevant boundaries,
\begin{align*}
 \spaceU &:= \left\{ \testu \in \vec H^1(\Omega)^d \, \big| \, (1-\porosity)\testu \in \vec H(\mathrm{div};\Omega),\ \testu = \bm{0}\text{ on }\MechanicsDirichletBoundary \right\},\\
 \spaceV &:= \left\{ \testv \in \vec H^1(\Omega)^d \, \big| \, \porosity\testv \in \vec H(\mathrm{div};\Omega),\ \testv = \bm{0} \text{ on }\FlowDirichletBoundary \right\},\\
 \spaceP &:= L^2(\Omega).
\end{align*}
\begin{remark}
We note that in the weak formulation the constraints $(1-\phi)\testu, \phi\testv \in \vec H(\mathrm{div}; \Omega)$ are formally required for the corresponding terms to be well-defined. This is in fact a regularity condition on $\phi$, where it is sufficient to consider that both $\phi$ and $1/\phi$ belong to $W^{s,r}(\Omega)$ with $s>d/r$ and $r\geq 1$. More details in~\cite{BARNAFI2020}. 
\end{remark}
Then the canonical weak formulation of~\eqref{linearized-model} reads: \textit{Find $(\displacement^n,\fluidvelocity^n,p^n)\in \spaceU  \times \spaceV \times \spaceP$ such that for all test functions $(\testu,\testv,\testp)\in \testspaceU \times \testspaceV \times \testspaceP$ it holds that}
\begin{subequations}
\label{semi-discrete-three-field}
\begin{align}
 \llangle \rhos(1-\porosity)\frac{\displacement^n -2\displacement^{n-1} + \displacement^{n-2}}{\Delta t^2}, \testu \rrangle  + \llangle \mathbb{C} \, \eps{\displacement^n}, \eps{\testu} \rrangle - \llangle p^n, \DIV \left((1-\porosity) \testu \right) \rrangle&\\
 - \llangle \porosity^2 \permeability^{-1} \left(\fluidvelocity^n - \frac{\displacement^n - \displacement^{n-1}}{\Delta t} \right), \testu \rrangle &= \llangle \f_\mathrm{s}^n, \testu \rrangle ,\nonumber\\[8pt]
 \llangle \rhof\porosity\frac{\fluidvelocity^n - \fluidvelocity^{n-1}}{\Delta t}, \testv \rrangle +  \llangle \porosity 2\mu_\mathrm{f}\, \eps{\fluidvelocity^n}, \eps{\testv} \rrangle  - \llangle p^n, \DIV \left(\porosity\testv \right) \rrangle &\\
 + \llangle \porosity^2 \permeability^{-1} \left(\fluidvelocity^n - \frac{\displacement^n - \displacement^{n-1}}{\Delta t} \right), \testv \rrangle  &= \llangle {\f}_\mathrm{f}^n, \testv \rrangle,\nonumber\\[3pt]
 \llangle \frac{(1-\porosity)^2}{\kappa_\mathrm{s}} \frac{p^n - p^{n-1}}{\Delta t}, \testp \rrangle  + \llangle \DIV\, \left( \porosity \fluidvelocity^n +  \left(1-\porosity\right) \frac{\displacement^n - \displacement^{n-1}}{\Delta t} \right), \testp \rrangle  &= 0.
\end{align}
\end{subequations}

The numerical discretization in space is based on the Galerkin projection of the solution $(\displacement^n,\fluidvelocity^n,p^n)\in \spaceU  \times \spaceV \times \spaceP$ on suitable discrete finite element spaces $\spaceUh,\,\spaceVh,\,\spacePh$ that for the sake of simplicity are assumed to be conforming, namely $\spaceUh\subset\spaceU,\,\spaceVh\subset\spaceV,\,\spacePh\subset\spaceP$. 
Also, all the physical parameters of the tangent problem are assumed to be constant in time and uniformly bounded in space. 
Under these assumptions, the fully discrete version of the problem is formally equivalent to \eqref{semi-discrete-three-field}, where the solution $(\bm{u}_{\mathrm{s},h}^n,\bm{v}_{\mathrm{f},h}^n,p_h^n)$ is sought in $\spaceUh  \times \spaceVh \times \spacePh$ 
and the test functions are taken in the same discrete spaces. Then, to avoid redundancy of notation, we will identify problem \eqref{semi-discrete-three-field} with the fully discrete one and we will omit to specify the subindex $h$, unless strictly necessary.
The finite element spaces will be kept generic throughout the derivation of the numerical solution algorithms, until the discussion of suitable numerical examples that will refer to precise choices of such spaces.

\section{A two-way split inspired by alternating minimization\label{section:undrained-split}}


In the following, we introduce an iterative splitting for the semi-discrete approximation~\eqref{semi-discrete-three-field}, decoupling the momentum equation for the solid phase and the remaining two equations -- the method will be directly applicable for the fully discrete approximation. The systematic construction (and later analysis) of the decoupling scheme is based on the general framework introduced in~\cite{Both2019}. The central idea is to first equivalently rewrite the semi-discrete approximation as an auxiliary convex minimization problem, and second apply alternating minimization to derive a robust block-partitioned solver. Ultimately, reformulated in terms of~\eqref{semi-discrete-three-field}, the final scheme is closely related to the \emph{undrained split} for the quasi-static Biot equations~\cite{Kim2011}, adding a \textit{div-div} stabilization term to the momentum equation for the solid phase.

In what follows we require the following assumption, which has two modeling consequences: On one side, it rules out the possibility of considering the incompressible limit ($\kappa_\mathrm{s}\to\infty$) with this approach, and on the other one it imposes that the domain cannot be composed only of fluid ($\phi\neq 1$).
\begin{assumption}\label{assumption-compressibility}
 It holds $\frac{1}{N}:=\frac{(1-\porosity)^2}{\kappa_\mathrm{s}}>0$ almost everywhere in $\Omega$. 
\end{assumption}

\subsection{Problem formulation as convex minimization}

We choose $\displacement^n$ and $\fluidvelocity^n$ as primary variables. Under Assumption~\ref{assumption-compressibility}, the mass conservation equation can be inverted with respect to the pressure, such that
\begin{align}
\label{pressure-definition-semi-discrete}
 p^n = N \left(\Delta t \,g_p^n - \Delta t \,\DIV\, \left( \porosity \fluidvelocity^n\right) - \DIV\,\left( \left(1-\porosity\right) \displacement^n \right) \right),
\end{align}
where
\begin{align*}
 g_p^n:= \frac{ (1-\porosity)^2}{\kappa_\mathrm{s}} \frac{1}{\Delta t} p^{n-1} + \frac{1}{\Delta t}\DIV\,\left( \left(1-\porosity\right) \displacement^{n-1} \right).
\end{align*}
This allows to formally reduce \eqref{semi-discrete-three-field} to a two-field formulation for the solid displacement and fluid velocity: \textit{Find $(\displacement^n,\fluidvelocity^n)\in \spaceU \times \spaceV$ such that for all test functions $(\testu,\testv)\in \testspaceU \times \testspaceV$ it holds that}
\begin{subequations}
\label{semi-discrete-two-field}
\begin{align}
 &\llangle \frac{\rhos(1-\porosity)}{\Delta t^2} \displacement^n, \testu \rrangle  + \llangle \mathbb{C}\,\eps{\displacement^n}, \eps{\testu} \rrangle - \llangle \porosity^2 \permeability^{-1} \left(\fluidvelocity^n - \frac{1}{\Delta t} \displacement^n\right), \testu \rrangle\\
 \nonumber
 &\qquad+ N \llangle - \Delta t\, g_p^n + \Delta t \,\DIV\, \left( \porosity \fluidvelocity^n\right) + \DIV\,\left( \left(1-\porosity\right) \displacement^n \right), \DIV \left((1-\porosity) \testu \right) \rrangle  = \llangle {\g}_\mathrm{s}^n, \testu \rrangle ,\\[8pt]
 & \llangle \porosity \fluidvelocity^n, \testv \rrangle +  \Delta t\llangle \porosity 2\mu_\mathrm{f} \, \eps{\fluidvelocity^n}, \eps{\testv} \rrangle + \Delta t\llangle \porosity^2 \permeability^{-1} \left(\fluidvelocity^n - \frac{1}{\Delta t} \displacement^n \right), \testv \rrangle \\
 \nonumber
 &\qquad + N \llangle -\Delta t\, g_p^n + \Delta t \,\DIV\, \left( \porosity \fluidvelocity^n\right) + \DIV\,\left( \left(1-\porosity\right) \displacement^n \right), \Delta t \DIV \left(\porosity\testv \right) \rrangle  = \Delta t\llangle {\g}_\mathrm{f}^n, \testv \rrangle,
 \nonumber
\end{align}
\end{subequations}
where the momentum equation for the fluid has been scaled by $\Delta t$, and $\g_\mathrm{s}^n\in \testspaceU^\star$ and $\g_\mathrm{f}^n\in \testspaceV^\star$ are defined by
\begin{alignat*}{2}
 \llangle {\g}_\mathrm{s}^n, \testu \rrangle &:= \llangle \f_\mathrm{s}^n, \testu \rrangle + \llangle \frac{\rhos(1 - \porosity)}{\Delta t^2} \left( 2 \displacement^{n-1} - \displacement^{n-2} \right), \testu \rrangle + \llangle \frac{\porosity^2 \permeability^{-1}}{\Delta t} \displacement^{n-1}, \testu \rrangle,&\quad&\testu\in\testspaceU,\\
 \llangle {\g}_\mathrm{f}^n, \testv \rrangle &:= \llangle \f_\mathrm{f}^n, \testv \rrangle + \llangle \porosity \fluidvelocity^{n-1} , \testv \rrangle - \llangle \porosity^2 \permeability^{-1} \displacement^{n-1}, \testv \rrangle ,&&\testv\in\testspaceV.
\end{alignat*}

The symmetry and uniform coercivity of the governing equations~\eqref{semi-discrete-two-field} identify those as the optimality conditions of a block-separable convex minimization problem. Namely it holds
\begin{align}
\label{minimization-formulation-semi-discrete-linearized-model}
 (\displacement^n,\fluidvelocity^n) = \argmin_{(\displacement,\fluidvelocity)\in \spaceU \times \spaceV} \, \discreteenergy(\displacement,\fluidvelocity),
\end{align}
with the energy given by
\begin{align}
\nonumber
 \discreteenergy(\displacement,\fluidvelocity) 
 &:=
 \frac{1}{2} \llangle \frac{\rhos(1-\porosity)}{\Delta t^2} \displacement, \displacement \rrangle
 +
 \frac{1}{2} \llangle \mathbb{C}\,\eps{\displacement}, \eps{\displacement} \rrangle\\
 \nonumber
 &\quad 
 +
 \frac{1}{2}\llangle \rhof\porosity \fluidvelocity, \fluidvelocity \rrangle +  \frac{\Delta t}{2} \llangle \porosity 2\mu_\mathrm{f} \, \eps{\fluidvelocity}, \eps{\fluidvelocity} \rrangle \\
 \nonumber
 &\quad+
 \frac{N}{2} \left\| \Delta t \,g_p^n - \Delta t \,\DIV\, \left( \porosity \fluidvelocity\right) - \DIV\,\left( \left(1-\porosity\right) \displacement \right) \right\|^2 \\
 \nonumber
 &\quad +
 \frac{\Delta t}{2} \llangle \porosity^2 \permeability^{-1} \left(\fluidvelocity - \frac{1}{\Delta t} \displacement \right), \left(\fluidvelocity - \frac{1}{\Delta t} \displacement \right) \rrangle \\
 \label{definition:objective-function}
 &\quad-
 \llangle {\g}_\mathrm{s}^n, \displacement \rrangle
 -
 \Delta t \llangle {\g}_\mathrm{f}^n, \fluidvelocity \rrangle.
\end{align}

The formulation
\eqref{minimization-formulation-semi-discrete-linearized-model}-\eqref{definition:objective-function}
serves as basis for the succeeding construction of a robust split scheme for~\eqref{semi-discrete-three-field}.

\subsection{Robust splitting via alternating minimization\label{section:undrained-split-alternating-minimization}}

Following the approach of~\cite{Both2019}, we propose an iterative block-partitioned solver for the problem \eqref{semi-discrete-three-field}. In particular, the fundamental alternating minimization algorithm is applied to the equivalent variational 
formulation~\eqref{minimization-formulation-semi-discrete-linearized-model}, cf.\  Alg.~\ref{algorithm:undrained-split} for the definition of a single iteration with index $k$. By construction, the approximate solution consecutively minimizes the system energy $\mathcal{J}$.
\begin{algorithm}[!ht]
 \caption{Iteration $k\geq 1$ of the alternating minimization applied to~\eqref{minimization-formulation-semi-discrete-linearized-model}}
 \label{algorithm:undrained-split}
 \normalsize
 \SetAlgoLined
 \DontPrintSemicolon
 
  \vspace{0.25em}
 
  Input: $(\displacement^{n,k-1},\fluidvelocity^{n,k-1})\in\spaceU \times \spaceV$ \; \vspace{0.25em}
   
  Determine $\displacement^{n,k} := \argmin_{\displacement\in\spaceU}\, \discreteenergy(\displacement,\fluidvelocity^{n,k-1})$\; \vspace{0.25em}

  Determine $\fluidvelocity^{n,k} := \argmin_{\fluidvelocity\in\spaceV}\, \discreteenergy(\displacement^{n,k},\fluidvelocity)$\; \vspace{0.2em}
\end{algorithm}

By introducing a pressure iterate, $p^{n,k}$, analogously to~\eqref{pressure-definition-semi-discrete}
\begin{align*}
  p^{n,k} := N \left(\Delta t \,g_p^n - \Delta t \,\DIV\, \left( \porosity \fluidvelocity^{n,k}\right) - \DIV\,\left( \left(1-\porosity\right) \displacement^{n,k} \right) \right),\quad k\geq 0,
\end{align*}
Alg.~\ref{algorithm:undrained-split} can be equivalently reformulated in the context of the three-field formulation~\eqref{semi-discrete-three-field}. In particular, the $k$-th iteration of the iterative splitting scheme decouples in two steps. In the first step, a \textit{div-div} stabilized momentum equation for the solid phase is solved: \textit{given $(\fluidvelocity^{n,k-1},p^{n,k-1})\in \spaceV \times \spaceP$, find $\displacement^{n,k} \in \spaceU$ satisfying for all $\testu \in \testspaceU$}
\begin{align}
 &\llangle \rhos(1-\porosity)\frac{\displacement^{n,k} -2\displacement^{n-1} + \displacement^{n-2}}{\Delta t^2}, \testu \rrangle  + \llangle \mathbb{C} \, \eps{\displacement^{n,k}}, \eps{\testu} \rrangle\\
 \nonumber
 &\qquad
 + N \llangle \DIV \left((1 - \porosity) (\displacement^{n,k} - \displacement^{n,k-1})\right), \DIV \left((1 - \porosity) \testu \right) \rrangle \nonumber\\
 \label{undrained-split-1}
 &\qquad  - \llangle p^{n,k-1}, \DIV \left((1-\porosity) \testu \right) \rrangle - \llangle \porosity^2 \permeability^{-1} \left(\fluidvelocity^{n,k-1} - \frac{\displacement^{n,k} - \displacement^{n-1}}{\Delta t} \right), \testu \rrangle = \llangle \f_\mathrm{s}^n, \testu \rrangle.
\end{align}
In the second step, the mass conservation and fluid momentum equations are solved: \textit{given $\displacement^{n,k} \in \spaceU$, find $(\fluidvelocity^{n,k},p^{n,k})\in  \spaceV \times \spaceP$, satisfying for all $(\testv,\testp)\in\testspaceV\times\testspaceP$}
\begin{subequations}
 \label{undrained-split-2}
 \begin{align}
%
 &\llangle \rhof\porosity\frac{\fluidvelocity^{n,k} - \fluidvelocity^{n-1}}{\Delta t}, \testv \rrangle +  \llangle \porosity 2\mu_\mathrm{f}\, \eps{\fluidvelocity^{n,k}}, \eps{\testv} \rrangle  - \llangle p^{n,k}, \DIV \left(\porosity\testv \right) \rrangle \\
 &\qquad+ \llangle \porosity^2 \permeability^{-1} \left(\fluidvelocity^{n,k} - \frac{\displacement^{n,k} - \displacement^{n-1}}{\Delta t} \right), \testv \rrangle  = \llangle {\f}_\mathrm{f}^n, \testv \rrangle,\nonumber\\[8pt]
 &\llangle \frac{(1-\porosity)^2}{\kappa_\mathrm{s}} \frac{p^{n,k} - p^{n-1}}{\Delta t}, \testp \rrangle  + \llangle \DIV\, \left( \porosity \fluidvelocity^{n,k} +  \left(1-\porosity\right) \frac{\displacement^{n,k} - \displacement^{n-1}}{\Delta t} \right), \testp \rrangle  = 0,
\end{align}
\end{subequations}
In the remainder of this paper, we refer to the scheme~\eqref{undrained-split-1}--\eqref{undrained-split-2} as the \textit{alternating minimization split}. 

In equation \eqref{undrained-split-1} the term 
$$N \llangle \DIV \left((1 - \porosity) (\displacement^{n,k} - \displacement^{n,k-1})\right), \DIV \left((1 - \porosity) \testu \right) \rrangle$$ 
naturally emerges to stabilize, namely control, the increment of $\displacement^{n}$ at each iteration, such that convergence is guaranteed. This will be proved in more detail in Section~\ref{section:convergence-analysis:undrained-split}.

\section{Diagonally \texorpdfstring{$L^2$}{L2}--stabilized two-way split \label{section:fixed-stress}}

Another prominent class of block-partitioned solvers for coupled problems with saddle-point structure are $L^2$--stabilized splits, which have been successful especially in the context of coupled flow and mechanics. The so-called \textit{fixed-stress split} for the quasi-static Biot equations~\cite{Kim2009,Kim2011}, for instance, decouples solid and flow computations and employs simple $L^2$--stabilization of the mass conservation equation, resulting in unconditional convergence~\cite{Mikelic2013,Both2017}. It is worth mentioning  that in practice the fixed-stress split often is superior to the undrained split~\cite{Kim2009}, also motivating further investigation in the context of this work. Moreover, in the context of thermoporoelasticity diagonal $L^2$--stabilization has been recently investigated for coupled systems consisting of more than two equations~\cite{Brun2020}. In particular, it has been observed that adding stabilization to multiple equations can be beneficial. 

In the following, we present a \textit{diagonally $L^2$--stabilized two-way split} for~\eqref{semi-discrete-three-field}. At first, we allow for stabilization of any of the three equations, introducing three stabilization parameters: $\solidstabilization$ (tensor-valued), $\fluidstabilization$ (tensor-valued), $\pressurestabilization$ (scalar-valued), potentially varying in space.

A single iteration of the splitting scheme is composed of two steps. Let $k\geq1$ denote the iteration index. Following the idea of the fixed-stress approach, the stabilized fluid flow problem is solved first; this is not required for convergence. The $L^2$-stabilized fluid flow step reads:  \textit{given $\displacement^{n,k-1} \in \spaceU$, find $(\fluidvelocity^{n,k},p^{n,k})\in \spaceV \times \spaceP$, satisfying for all $(\testv,\testp)\in\testspaceV\times\testspaceP$}
\begin{subequations}
 \label{fp-s-1}
 \begin{align}
 &\llangle \rhof\porosity\frac{\fluidvelocity^{n,k} - \fluidvelocity^{n-1}}{\Delta t}, \testv \rrangle +  \llangle \porosity 2\mu_\mathrm{f}\, \eps{\fluidvelocity^{n,k}}, \eps{\testv} \rrangle + \llangle \fluidstabilization (\fluidvelocity^{n,k} - \fluidvelocity^{n,k-1}), \testv \rrangle  \\
 \nonumber
 &\qquad - \llangle p^{n,k}, \DIV \left(\porosity\testv \right) \rrangle + \llangle \porosity^2 \permeability^{-1} \left(\fluidvelocity^{n,k} - \frac{\displacement^{n,k-1} - \displacement^{n-1}}{\Delta t} \right), \testv \rrangle  = \llangle {\f}_\mathrm{f}^n, \testv \rrangle,\nonumber\\[8pt]
 &\llangle \frac{(1-\porosity)^2}{\kappa_\mathrm{s}} \frac{p^{n,k} - p^{n-1}}{\Delta t}, \testp \rrangle  + \llangle \pressurestabilization (p^{n,k} - p^{n,k-1}), \testp \rrangle  \\
 \nonumber
 &\qquad +
 \llangle \DIV\, \left( \porosity \fluidvelocity^{n,k}\right), \testp \rrangle + \llangle \DIV\,\left( \left(1-\porosity\right) \frac{\displacement^{n,k-1} - \displacement^{n-1}}{\Delta t} \right), \testp \rrangle  = 0,
\end{align}
\end{subequations}
The second ($L^2$-stabilized solid mechanics) step reads: \textit{given $(\fluidvelocity^{n,k},p^{n,k})\in \spaceV \times \spaceP$, find $\displacement^{n,k} \in \spaceU$ satisfying for all $\testu \in \testspaceU$}
\begin{align}
 \label{fp-s-2}
 &\llangle \rhos(1-\porosity)\frac{\displacement^{n,k} -2\displacement^{n-1} + \displacement^{n-2}}{\Delta t^2}, \testu \rrangle  + \llangle \mathbb{C} \, \eps{\displacement^{n,k}}, \eps{\testu} \rrangle
 + \llangle \solidstabilization (\displacement^{n,k} - \displacement^{n,k-1}), \testu \rrangle\\
 \nonumber
 &\qquad  - \llangle p^{n,k}, \DIV \left((1-\porosity) \testu \right) \rrangle - \llangle \porosity^2 \permeability^{-1} \left(\fluidvelocity^{n,k} - \frac{\displacement^{n,k} - \displacement^{n-1}}{\Delta t} \right), \testu \rrangle = \llangle \f_\mathrm{s}^n, \testu \rrangle.
\end{align}

Finding physically motivated stabilization parameters $\solidstabilization$, $\fluidstabilization$, $\pressurestabilization$, as for the original fixed-stress split, mainly due to an increased complexity of the model (additional terms associated to dynamics) compared to the quasi-static Biot equations. For the same consideration, a simple discussion utilizing convex minimization and alternating minimization as in Section~\ref{section:undrained-split} is problematic, while being possible for the original fixed-stress split~\cite{Both2019}. However, to get a better intuition, a closer look at \mbox{(skew-)}symmetries of the governing equations~\eqref{semi-discrete-three-field} and (partial) Schur complements is useful; in particular, the skew-symmetry of the $\displacement-p$ coupling motivates positive stabilization of the mass conservation equation, and the symmetry of the $\displacement-\fluidvelocity$ coupling suggests negative stabilization of the momentum equation of the solid phase. Since after all the $\displacement-(\fluidvelocity,p)$ coupling is neither symmetric nor skew-symmetric, these observations merely lead to inaccurate insight. Instead, a succeeding convergence analysis in Section~\ref{fp-s-split} is going to suggest practical, potentially vanishing values for the parameters, which eventually lead to unconditional stability.

\section{\textit{A priori} convergence analysis of the proposed two-way splits}\label{section:analysis}

In this section, we address the \textit{a priori} convergence analysis of both the alternating minimization split~\eqref{undrained-split-1}--\eqref{undrained-split-2} and the diagonally $L^2$--stabilized two-way split~\eqref{fp-s-1}--\eqref{fp-s-2}, proposed in Section~\ref{section:undrained-split} and Section~\ref{section:fixed-stress}, respectively. The two primary goals are to (i) prove the linear convergence of the alternating minimization split, and (ii) determine ranges and specific practical values for the stabilization parameters employed within the diagonally $L^2$--stabilized two-way split ensuring convergence. The two goals will be achieved using different techniques. For item (i) the interpretation of the alternating minimization split as alternating minimization applied to a strongly convex minimization problem is extensively exploited, allowing for the systematic application of sharp abstract convergence results from the literature; for item (ii) a slightly more technical approach is chosen due to the fact that the two-way split~\eqref{fp-s-1}--\eqref{fp-s-2} does not fully conform with any (skew-)symmetry. In particular, we relax the classical (quotient) convergence criterion by means of the root convergence criterion, briefly called r-convergence, see for example \cite{NoceWrig06}. More precisely, we formulate a general convergence criterion (based on relative stability) that turns out to be a sufficient condition for the r-convergence of the proposed iterative method.

\subsection{Convergence analysis of the alternating minimization split for the tangent model\label{section:convergence-analysis:undrained-split}}
 
Guaranteed linear convergence of the alternating minimization split~\eqref{undrained-split-1}--\eqref{undrained-split-2} is a direct consequence of its interpretation as alternating minimization applied to a (strongly) convex optimization problem, cf., Section~\ref{section:undrained-split-alternating-minimization} and e.g.,~\cite{Beck2015}.
Furthermore, using simple yet largely sharp abstract convergence results for alternating minimization in a Banach space setting, cf.~\cite{Both2019b}, an upper bound of the rate of convergence can be provided. In the aforementioned work, it is showed that in each of the two steps of the alternating minimization, the energy values of the iterates are sequentially decreased with the decrease merely governed by convexity and continuity properties of the restricted minimization problems. Since the energy $\discreteenergy$ is quadratic, energy differences relative to the optimum will directly translate to distances to the solution, measured in the problem-specific norm induced by the Hessian of the energy (at an arbitrary point). We define $|\cdot|$ on $\testspaceU \times \testspaceV$ for $(\testu,\testv)\in \testspaceU \times \testspaceV$ by
 \begin{align*}
  \left| (\testu,\testv) \right|^2 &:= \llangle \frac{\rhos(1-\porosity)}{\Delta t^2} \testu, \testu \rrangle + \llangle \mathbb{C} \eps{\testu}, \eps{\testu} \rrangle
+\llangle \rhof \porosity \testv, \testv \rrangle 
+
\Delta t \llangle \porosity 2\mu_\mathrm{f} \eps{\testv}, \eps{\testv} \rrangle\\ 
&\qquad +
N \left\| \Delta t \DIV \left(\porosity \testv \right) + \DIV \left((1-\porosity) \testu \right)\right\|^2\\
&\qquad
+
\Delta t \llangle \porosity^2 \permeability^{-1} \left(\testv - \frac{1}{\Delta t} \testu \right), \left(\testv - \frac{1}{\Delta t} \testu \right) \rrangle.
\end{align*} 

In order to estimate the rate of convergence, we introduce a technical, \textit{a priori} material constant $\gamma\geq 0$, given by 
\begin{align}
 \label{definition:material-constant-undrained-split}
  \gamma := \mathrm{min}\left\{\mathrm{max} \left\{ \gamma_1(\zeta,\eta,\vartheta), \gamma_2(\zeta,\eta,\vartheta) \right\} \, \Big| \, \zeta>0,\  \eta\in[0,1],\ \vartheta \in [0,1] \right\},
\end{align}
where
\begin{subequations}
\label{eq:gamma-1-2}
 \begin{align}
 \gamma_1(\zeta,\eta,\vartheta) :=&
 (1+\zeta^{-1})\eta N \Delta t^2 \left\| \frac{\left|\GRAD\porosity\right|^2}{\rhos(1-\porosity)} \right\|_{L^\infty(\Omega)}
 +
 \vartheta \Delta t \left\| \frac{\porosity^2\kappa_\mathrm{m}^{-1}}{\rhos(1-\porosity)} \right\|_{L^\infty(\Omega)},\\
 \gamma_2(\zeta,\eta,\vartheta) :=&
 (1+\zeta) \frac{N}{K_\mathrm{dr,\porosity,min}} +
 (1+\zeta^{-1})(1-\eta) N C_\mathrm{Korn,1} 
 +
 (1-\vartheta) \frac{C_\mathrm{Korn,2}}{\Delta t},
 \end{align}
 \end{subequations}
 with $\kappa_\mathrm{m}>0$ denoting the smallest eigenvalue of the permeability tensor $\permeability$,
 $K_\mathrm{dr,\porosity,min}>0$ being a porosity dependent bulk modulus type constant given by
 \begin{align}
 \label{eq:bulk-modulus}
  K_\mathrm{dr,\porosity,min}^{-1}
  &:= \left\| (1-\porosity)^2 \mathbf{I} : \mathbb{C}^{-1} : \mathbf{I} \right\|_{L^\infty(\Omega)},
 \end{align}
 and $C_\mathrm{Korn,1},C_\mathrm{Korn,2}>0$ taking on the role of generalized Korn/Poincar\'e constants, 
 defined as the minimum positive numbers such that
 \begin{alignat}{2}
 \label{eq:korn1}
   \llangle \GRAD \porosity^\top \GRAD \porosity \testu, \testu \rrangle
  &\leq 
  C_\mathrm{Korn,1} \llangle \mathbb{C} \eps{\testu}, \eps{\testu} \rrangle,&&\text{ for all }\testu\in\testspaceU,\\
  \llangle \porosity^2 \permeability^{-1} \testu, \testu \rrangle
  &\leq 
 \label{eq:korn2}
 C_\mathrm{Korn,2} \llangle \mathbb{C} \eps{\testu}, \eps{\testu} \rrangle,&&\text{ for all }\testu\in\testspaceU.
 \end{alignat}
 It is fair to assume that $C_\mathrm{Korn,1}$ and $C_\mathrm{Korn,2}$ are closely related to the inverse of the drained bulk modulus $K_\mathrm{dr}:= K_\mathrm{dr,0,min}$.

Finally, focusing only on the fully transient model, the linear convergence result for the alternating minimization split scheme reads as follows.

\begin{theorem}[Linear convergence of the alternating minimization split \label{theorem:convergence-undrained}]
 Let $(\displacement^n,\fluidvelocity^n)\in \spaceU \times \spaceV$, $n\geq 2$, denote the solution to~\eqref{minimization-formulation-semi-discrete-linearized-model}, and let $(\displacement^{n,k},\fluidvelocity^{n,k})\in \spaceU \times \spaceV$, $k \geq 1$, denote the corresponding approximation defined by Alg.~\ref{algorithm:undrained-split}. Let $\gamma\geq 0$ be the material constant defined as in~\eqref{definition:material-constant-undrained-split}. 
 Then, for all $k\geq 1$, it holds that 
 \begin{align*}
  \left|(\displacement^{n,k} - \displacement^n, \fluidvelocity^{n,k} - \fluidvelocity^n)\right|^2 & \leq \left(1 - \frac{1}{1+\gamma} \right)^2  \  \left|(\displacement^{n,k-1} - \displacement^n, \fluidvelocity^{n,k-1} - \fluidvelocity^n)\right|^2.
 \end{align*}
\end{theorem}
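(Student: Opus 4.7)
The plan is to exploit the fact that Algorithm~\ref{algorithm:undrained-split} is literally alternating minimization applied to the strongly convex quadratic problem \eqref{minimization-formulation-semi-discrete-linearized-model}, and to invoke a sharp abstract contraction estimate for this scheme in a Banach-space setting, as developed in [Both2019b]. The key observation is that $\discreteenergy$ is a strictly convex quadratic functional whose Hessian induces exactly the norm $|\cdot|$. As a consequence, for any $(\testu,\testv) \in \spaceU\times\spaceV$ one has the quadratic identity
\begin{equation*}
 2\bigl(\discreteenergy(\testu,\testv) - \discreteenergy(\displacement^n,\fluidvelocity^n)\bigr) \;=\; \bigl|(\testu-\displacement^n,\testv-\fluidvelocity^n)\bigr|^2,
\end{equation*}
so a contraction in energy excess translates immediately into the same contraction in $|\cdot|^2$.

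The main work is therefore to identify a ``relative coupling'' constant $\gamma$ so that the abstract alternating-minimization estimate applies with rate $1-1/(1+\gamma)$ per half-step (hence $(1-1/(1+\gamma))^2$ per full iteration once expressed in the Hessian norm). Following the framework of [Both2019], one writes the Hessian in $2\times2$ block form with diagonal blocks induced by the solid and fluid half-energies (the parts actually minimized in~\eqref{undrained-split-1} and~\eqref{undrained-split-2} respectively), and one has to bound the off-diagonal part relative to the diagonal. Concretely, I would split the two coupling sources: the quadratic form $N\|\Delta t\,\DIV(\porosity\testv)+\DIV((1-\porosity)\testu)\|^2$ arising from the Schur-complemented pressure, and the friction bilinear form $\llangle \porosity^2\permeability^{-1}(\testv-\testu/\Delta t),\cdot\rrangle$. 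Applying Young's inequality with parameter $\zeta>0$ separates the cross term from the cross-quadratic in the div-div contribution, while the convex parameters $\eta\in[0,1]$ and $\vartheta\in[0,1]$ distribute the $\testu$-dependent pieces of the div-div and friction coupling between the two diagonal blocks.

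Once this split is done, the $\testu$-pieces have to be controlled by the solid diagonal block. Here I would integrate by parts in $\DIV((1-\porosity)\testu)$ so that only the volumetric term $(1-\porosity)\DIV\testu$ contributes to the genuine stress energy, which is bounded by $K_{\mathrm{dr},\porosity,\mathrm{min}}^{-1}\llangle \mathbb{C}\eps{\testu},\eps{\testu}\rrangle$ via~\eqref{eq:bulk-modulus}, while the lower-order piece $(\GRAD\porosity)^\top\testu$ is absorbed into the elastic energy through the Korn--Poincaré inequality~\eqref{eq:korn1}; the friction term is controlled analogously using~\eqref{eq:korn2}. The $\testv$-pieces are either kept in the fluid diagonal block (through inertia, viscous Korn, and the $\kappa_\mathrm{m}$-weighted friction) or paid for by Young's inequality against the reserved mass in $\rhos(1-\porosity)$. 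Matching the two resulting bounds yields precisely the expressions $\gamma_1(\zeta,\eta,\vartheta)$ and $\gamma_2(\zeta,\eta,\vartheta)$ in~\eqref{eq:gamma-1-2}, and $\gamma$ in~\eqref{definition:material-constant-undrained-split} is obtained by the sharp choice $\min_{\zeta,\eta,\vartheta}\max(\gamma_1,\gamma_2)$.

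The hardest step will be the bookkeeping of the Young's inequality split with all three parameters simultaneously: in particular, tracking which portion of the $\testu$-coupling (from the div-div form after integration by parts, and from the friction $\porosity^2\permeability^{-1}\testu/\Delta t$) is absorbed by the elastic block $\llangle \mathbb{C}\eps{\testu},\eps{\testu}\rrangle$ versus the solid inertia block $\llangle \rhos(1-\porosity)\Delta t^{-2}\testu,\testu\rrangle$, so that the resulting two-sided bound is symmetric enough to plug into the abstract alternating-minimization contraction theorem. Once this is resolved, the stated estimate follows by one application of that theorem combined with the energy-norm identity above.
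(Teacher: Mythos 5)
Your proposal follows essentially the same route as the paper's proof: invoke the abstract alternating-minimization contraction result of [Both2019b] with strong-convexity and Lipschitz constants equal to one in the Hessian and partial-Hessian (semi-)norms, reduce everything to the relative bound of the partial norms by the full norm with constant $1+\gamma$, establish that bound by splitting $\DIV\left((1-\porosity)\testu\right)$, controlling the volumetric part via $K_\mathrm{dr,\porosity,min}$ and the lower-order and friction parts via the generalized Korn constants, distributing with the parameters $\zeta,\eta,\vartheta$, and finally translate the energy contraction into the $|\cdot|$-norm through the exact quadratic identity. The only cosmetic slips are that the splitting is by the pointwise product rule rather than integration by parts, and that $\eta,\vartheta$ distribute the $\testu$-coupling between the solid inertia and elastic contributions (as your final paragraph correctly states), not between the solid and fluid blocks.
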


The convergence result is similar as for the undrained split for the quasi-static Biot equations, cf.~\cite{Mikelic2013}. In particular, the theoretical result suggests degenerating convergence for nearly incompressible and impermeable media. In contrast to the quasi-static Biot equations, porosity heterogeneities may also affect the performance of the splitting scheme, as the material constants $\gamma_1$ and $\gamma_2$ depend on the spatial gradients of $\porosity$. However, a numerical test in Section~\ref{section:numerics-undrained} does only show a weak influence.

The proof of Theorem~\ref{theorem:convergence-undrained} is a plain application of the following abstract convergence result for the alternating minimization, here specifically formulated in terms of Alg.~\ref{algorithm:undrained-split}. 

\begin{lemma}[Convergence of the alternating minimization~\cite{Both2019b}\label{Lemma:AM_convergence}]
Let $|\cdot|$, $|\cdot|_\mathrm{s}$, and $|\cdot|_\mathrm{f}$ denote semi-norms on $\testspaceU \times \testspaceV$, $\testspaceU$, and $\testspaceV$, respectively, such that:
\begin{itemize}
 \item[$\mathrm{(A_1)}$] There exist $\beta_\mathrm{s},\beta_\mathrm{f}\geq 0$, such that for all $(\testu,\testv)\in\testspaceU \times \testspaceV$ it holds that
\begin{align*}
 |(\testu,\testv)|^2 \geq \beta_\mathrm{s} | \testu 
|_\mathrm{s}^2\qquad \text{and} \qquad 
 |(\testu,\testv)|^2 \geq \beta_\mathrm{f} | \testv |_\mathrm{f}^2
\end{align*}

\end{itemize} 
Let $\discreteenergy:\spaceU \times \spaceV \rightarrow \mathbb{R}$ be Frech\'et differentiable with ${\cal D}\discreteenergy$ denoting its derivative such that:
\begin{itemize}
  \item[$\mathrm{(A_2)}$] The energy $\discreteenergy$ is strongly convex wrt.\ $|\cdot|$ with modulus $\sigma>0$, {i.e., for all $\displacement,\bardisplacement\in\spaceU$ and $\fluidvelocity,\barfluidvelocity\in\spaceV$ it holds that 
  \begin{align*}
  \discreteenergy(\bardisplacement,\barfluidvelocity) \geq \discreteenergy(\displacement,\fluidvelocity) + \llangle {\cal D}\discreteenergy(\displacement,\fluidvelocity), (\bardisplacement - \displacement,\barfluidvelocity - \fluidvelocity)  \rrangle + \frac{\sigma}{2} |(\bardisplacement - \displacement,\barfluidvelocity - \fluidvelocity)|^2.
  \end{align*}}
  \item[$\mathrm{(A_3)}$] The partial functional derivatives ${\cal D}_{\displacement} \discreteenergy$ and 
${\cal D}_{\fluidvelocity} \discreteenergy$ are uniformly Lipschitz continuous wrt.\ 
$|\cdot|_\mathrm{s}$ and $|\cdot|_\mathrm{f}$ with Lipschitz constants $L_\mathrm{s}$ and 
$L_\mathrm{f}$, respectively, {i.e., for all $(\displacement,\fluidvelocity) \in \spaceU 
\times \spaceV$ and $(\testu,\testv) \in \testspaceU \times 
\testspaceV$ it holds that 
  \begin{align*}
   \discreteenergy(\displacement + \testu, \fluidvelocity) &\leq \discreteenergy(\displacement, \fluidvelocity) + \llangle {\cal D}_{\displacement} \discreteenergy(\displacement, \fluidvelocity),\testu \rrangle + \frac{L_\mathrm{s}}{2} \left| \testu \right|_\mathrm{s}^2\,,\\[1ex]
   \discreteenergy(\displacement, \fluidvelocity + \testv) &\leq \discreteenergy(\displacement, \fluidvelocity) + \llangle {\cal D}_{\fluidvelocity} \discreteenergy(\displacement, \fluidvelocity),\testv \rrangle + \frac{L_\mathrm{f}}{2} \left| \testv \right|_\mathrm{f}^2\,.
  \end{align*}}
\end{itemize}
\noindent
Let $(\displacement^n,\fluidvelocity^n)\in \spaceU \times \spaceV$ denote the unique
solution to~\eqref{minimization-formulation-semi-discrete-linearized-model}, and let $(\displacement^{n,k},\fluidvelocity^{n,k}) \in \spaceU \times \spaceV$ 
denote the corresponding approximation defined by Alg.~\ref{algorithm:undrained-split}. 
Then, for all $k\geq 1$ it follows that 
\begin{align*}
 &\discreteenergy(\displacement^{n,k},\fluidvelocity^{n,k}) - 
\discreteenergy(\displacement^n,\fluidvelocity^n)
\leq 
\left( 1 - \frac{\beta_\mathrm{s} \sigma}{L_\mathrm{s}} \right) \left( 1 
- \frac{\beta_\mathrm{f} \sigma}{L_\mathrm{f}} \right) \, \left( 
\discreteenergy(\displacement^{n,k-1},\fluidvelocity^{n,k-1}) - \discreteenergy(\displacement^n,\fluidvelocity^n) 
\right).
\end{align*}
\end{lemma}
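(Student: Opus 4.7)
The plan is to follow the standard convergence analysis of two-block alternating minimization for partially smooth, strongly convex functions, as developed in~\cite{Both2019b}. The argument proceeds sub-step by sub-step: first I would analyze the update from $(\displacement^{n,k-1},\fluidvelocity^{n,k-1})$ to $(\displacement^{n,k},\fluidvelocity^{n,k-1})$, then the update from $(\displacement^{n,k},\fluidvelocity^{n,k-1})$ to $(\displacement^{n,k},\fluidvelocity^{n,k})$, and chain the two per-sub-step bounds into the claimed product contraction factor.

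The central ingredient is a convex-combination sub-step estimate. For the first sub-step, which updates $\displacement$ only, I would evaluate the Lipschitz upper bound from (A3) along the segment $\displacement^{n,k-1} + t(\displacement^n - \displacement^{n,k-1})$ for $t\in[0,1]$, with $\fluidvelocity^{n,k-1}$ held fixed. Simultaneously, the strong convexity bound (A2) combined with (A1) provides a lower bound on $\discreteenergy(\displacement^n,\fluidvelocity^{n,k-1})$ from which the linear inner-product term appearing in the smoothness bound can be re-expressed in terms of energy values. Eliminating this inner product and choosing $t = \beta_\mathrm{s}\sigma/L_\mathrm{s}\in(0,1]$ causes the residual quadratic in $t$ to cancel exactly; then the exact partial minimization property $\discreteenergy(\displacement^{n,k},\fluidvelocity^{n,k-1}) \leq \discreteenergy(\displacement^{n,k-1} + t(\displacement^n - \displacement^{n,k-1}),\fluidvelocity^{n,k-1})$ delivers
\begin{equation*}
 \discreteenergy(\displacement^{n,k},\fluidvelocity^{n,k-1}) \leq \left(1 - \frac{\beta_\mathrm{s}\sigma}{L_\mathrm{s}}\right) \discreteenergy(\displacement^{n,k-1},\fluidvelocity^{n,k-1}) + \frac{\beta_\mathrm{s}\sigma}{L_\mathrm{s}}\, \discreteenergy(\displacement^n, \fluidvelocity^{n,k-1}).
\end{equation*}
The analogous estimate for the $\fluidvelocity$-sub-step has the same form with $\beta_\mathrm{f}$, $L_\mathrm{f}$ in place of $\beta_\mathrm{s}$, $L_\mathrm{s}$, and with $\discreteenergy(\displacement^{n,k},\fluidvelocity^n)$ as the auxiliary energy value.

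Chaining then consists of subtracting $\discreteenergy(\displacement^n,\fluidvelocity^n)$ from both sub-step bounds and recombining them. The resulting expression involves the two nonnegative cross-energy residuals $\discreteenergy(\displacement^n,\fluidvelocity^{n,k-1}) - \discreteenergy(\displacement^n,\fluidvelocity^n)$ and $\discreteenergy(\displacement^{n,k},\fluidvelocity^n) - \discreteenergy(\displacement^n,\fluidvelocity^n)$, which must be absorbed into the geometric contraction. Using the joint optimality $\mathcal{D}\discreteenergy(\displacement^n,\fluidvelocity^n) = 0$, strong convexity (A2), and the exact-minimization identities for each sub-step, these residuals can be controlled against the decrease contributed by the opposite sub-step, yielding the final product factor $(1 - \beta_\mathrm{s}\sigma/L_\mathrm{s})(1 - \beta_\mathrm{f}\sigma/L_\mathrm{f})$.

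The main obstacle is precisely this cancellation of the cross residuals: a direct application of (A2) bounds them only by partial seminorm expressions $|\displacement^{n,k} - \displacement^n|_\mathrm{s}^2$ and $|\fluidvelocity^{n,k-1} - \fluidvelocity^n|_\mathrm{f}^2$, which are not immediately controlled by the full energy gap $\discreteenergy(\displacement^{n,k-1},\fluidvelocity^{n,k-1}) - \discreteenergy(\displacement^n,\fluidvelocity^n)$. The resolution exploits that, at the intermediate state $(\displacement^{n,k},\fluidvelocity^{n,k-1})$, the exact first-sub-step minimization annihilates the $\displacement$-partial gradient, so the full gradient there reduces to its $\fluidvelocity$-component; this allows a telescoping that matches each partial-seminorm residual against the contraction contributed by the following sub-step and recombines symmetrically into the product factor.
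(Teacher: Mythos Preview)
The paper does not provide its own proof of this lemma; it is quoted from~\cite{Both2019b} and used as a black box in the proof of Theorem~\ref{theorem:convergence-undrained}. There is therefore no paper-proof to compare against, but your outline can be assessed on its own merits.

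Your ingredients are exactly the right ones, and the partial-optimality observation you invoke in your final paragraph is indeed the key mechanism. However, the organization of your argument manufactures the cross-residual difficulty unnecessarily. In the standard argument one applies (A$_2$) with the \emph{global} minimizer $(\displacement^n,\fluidvelocity^n)$ as target from the outset of each sub-step, rather than the mixed state $(\displacement^n,\fluidvelocity^{n,k-1})$. The partial optimality inherited from the \emph{preceding} sub-step --- namely $\mathcal{D}_{\fluidvelocity}\discreteenergy(\displacement^{n,k-1},\fluidvelocity^{n,k-1})=0$ before the $\displacement$-update, and $\mathcal{D}_{\displacement}\discreteenergy(\displacement^{n,k},\fluidvelocity^{n,k-1})=0$ before the $\fluidvelocity$-update --- kills the unwanted gradient component in the (A$_2$) lower bound. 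Substituting that bound into (A$_3$) and choosing $t=\sigma\beta_{\mathrm s}/L_{\mathrm s}$ then yields directly
\begin{equation*}
\discreteenergy(\displacement^{n,k},\fluidvelocity^{n,k-1})-\discreteenergy(\displacement^n,\fluidvelocity^n)
\le\Bigl(1-\frac{\sigma\beta_{\mathrm s}}{L_{\mathrm s}}\Bigr)
\bigl[\discreteenergy(\displacement^{n,k-1},\fluidvelocity^{n,k-1})-\discreteenergy(\displacement^n,\fluidvelocity^n)\bigr],
\end{equation*}
and analogously for the second sub-step, with no cross terms left over; the two factors then multiply trivially. Your route --- comparing first to $(\displacement^n,\fluidvelocity^{n,k-1})$ and $(\displacement^{n,k},\fluidvelocity^n)$ and only afterwards invoking partial optimality to absorb the resulting residuals --- may be salvageable, but the ``telescoping'' you describe is not spelled out, and it is not clear it recovers exactly the product factor rather than a weaker constant. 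Reorganizing so that partial optimality is used at the \emph{entry} of each sub-step, not as a patch at the end, turns the proof into a few lines. A minor caveat either way: for $k=1$ the condition $\mathcal{D}_{\fluidvelocity}\discreteenergy(\displacement^{n,0},\fluidvelocity^{n,0})=0$ is not available from a previous sub-step and must be handled separately (by assumption on the initial guess or a one-off weaker bound on the first half-step).
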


With this, we are able to prove Thm.~\ref{theorem:convergence-undrained}.

\begin{proof}[Proof of Thm.~\ref{theorem:convergence-undrained}]
In order to apply Lemma~\ref{Lemma:AM_convergence}, we verify the conditions $\mathrm{(A_1)}$--$\mathrm{(A_3)}$.
First of all, we note that the energy $\discreteenergy$ is quadratic. Since $|\cdot|$ is induced by the Hessian of $\discreteenergy$, i.e., 
\begin{alignat}{2}
 \label{Eq:proof-aux:second-derivative-energy}
  \left| (\testu,\testv) \right|^2 
  :=&
  \llangle D^2 \mathcal{J}(\displacement,\fluidvelocity) (\testu, \testv), (\testu, \testv) \rrangle,&& \ (\testu,\testv) \in \testspaceU \times \testspaceV,
\end{alignat}
(for arbitrary $(\displacement,\fluidvelocity) \in \testspaceU \times \testspaceV$),
the convexity property $\mathrm{(A_2)}$ is satisfied with $\sigma=1$.

Similarly, by defining $|\cdot|_\mathrm{s}$ and $|\cdot|_\mathrm{f}$ on $\testspaceU$ and $\testspaceV$, respectively, as partial Hessians of $\discreteenergy$
\begin{alignat*}{2}
  \left| \testu \right|_\mathrm{s}^2 
  :=&
  \llangle {\cal D}_{\displacement}^2 \mathcal{J}(\displacement,\fluidvelocity) \testu, \testu \rrangle,&& \ \testu \in \testspaceU  \\
   \left| \testv \right|_\mathrm{f}^2
  :=&\llangle {\cal D}_{\fluidvelocity}^2 \mathcal{J}(\displacement,\fluidvelocity) \testv, \testv \rrangle, && \ \testv\in\testspaceV, 
\end{alignat*}
(for arbitrary $(\displacement,\fluidvelocity) \in \testspaceU \times \testspaceV$),
the smoothness property $\mathrm{(A_3)}$ is satisfied with $L_\mathrm{s}=L_\mathrm{f}=1$. 

It remains to examine $\mathrm{(A_1)}$. In the following, we show that one can choose $\beta_\mathrm{s}=\beta_\mathrm{f} = (1+\gamma)^{-1}$, i.e., it holds 
\begin{subequations}
\label{proof:undrained-convergence:aux-1}
\begin{alignat}{2}
\label{proof:undrained-convergence:aux-1a}
  |\testu|_\mathrm{s}^2 \leq (1+\gamma) |(\testu, \testv)|^2,&&\quad \text{for all }(\testu,\testv) \in \testspaceU \times \testspaceV,\\
\label{proof:undrained-convergence:aux-1b}
  |\testv|_\mathrm{f}^2 \leq (1+\gamma) |(\testu, \testv)|^2,&&\quad \text{for all }(\testu,\testv) \in \testspaceU \times \testspaceV.
\end{alignat}
\end{subequations}
For both estimates, the following inequality will be of help
\begin{align}
\label{proof:undrained-convergence:aux-2}
 T^\star&:= N \underbrace{\left\| \DIV \left((1-\porosity) \testu \right) \right\|^2}_{=: T_1}
  +
  \frac{1}{\Delta t} \underbrace{\llangle \porosity^2 \permeability^{-1} \testu, \testu \rrangle}_{=:T_2}\\
  &
  \nonumber
  \leq 
  \gamma \left( \llangle \frac{\rhos(1-\porosity)}{\Delta t^2} \testu, \testu \rrangle
  +
  \llangle \mathbb{C} \eps{\testu}, \eps{\testu} \rrangle  \right).
\end{align}
Indeed, for $T_1$, using the product rule, the Cauchy-Schwarz inequality and Young's inequality, we obtain for all $\zeta>0$ 
 \begin{align}
 \label{proof:ud-aux3a}
  T_1& \leq
  (1+\zeta) \underbrace{\left\| (1-\porosity) \DIV \testu \right\|^2}_{=:T_1'}
    +
  (1+\zeta^{-1}) \underbrace{\left\| \GRAD \porosity \cdot \testu \right\|^2}_{=:T_1''}.
 \end{align}
 Further, employing the definitions of $K_\mathrm{dr,\porosity,min}$ and $C_\mathrm{Korn,1}$, see~\eqref{eq:bulk-modulus} and~\eqref{eq:korn1}, it follows that
\begin{subequations}
\label{proof:ud-aux3b}
 \begin{align}
  T_1' &\leq \frac{1}{K_\mathrm{dr,\porosity,min}} \llangle \mathbb{C} \eps{\testu}, \eps{\testu} \rrangle, \\
  T_1''
  &\leq  
  \Delta t^2 \left\| \frac{\GRAD \porosity^\top \GRAD\porosity}{\rhos(1-\porosity)} \right\|_{L^\infty(\Omega)} \llangle \frac{\rhos (1-\porosity)}{\Delta t^2} \testu, \testu \rrangle,\\
  T_1'' 
  &\leq 
  C_\mathrm{Korn,1} \llangle \mathbb{C} \eps{\testu}, \eps{\testu} \rrangle.
 \end{align}
\end{subequations}
 Similarly, employing the definitions of $\kappa_\mathrm{m}$, the smallest eigenvalue of $\permeability$, and $C_\mathrm{Korn,2}$, see~\eqref{eq:korn2}, for $T_2$ it holds
 \begin{subequations}
 \label{proof:ud-aux3c}
 \begin{align}
  T_2 &\leq \Delta t^2 \left\| \frac{\porosity^2\kappa_\mathrm{m}^{-1}}{\rhos(1-\porosity)} \right\|_{L^\infty(\Omega)} \llangle \frac{\rhos(1-\porosity)}{\Delta t^2} \testu, \testu \rrangle,\\
  T_2 &\leq C_\mathrm{Korn,2} \llangle \mathbb{C} \eps{\testu}, \eps{\testu} \rrangle.
 \end{align}
 \end{subequations}
 By combining~\eqref{proof:ud-aux3a}--\eqref{proof:ud-aux3c}, balancing the different upper bounds for $T_1''$ and $T_2$, and employing the definitions of $\gamma_1$ and $\gamma_2$, cf.~\eqref{eq:gamma-1-2}, we obtain for all $\zeta>0$, $\eta\in[0,1]$ and $\theta\in[0,1]$
 \begin{align*}
  T_1 + T_2 \leq \gamma_1(\zeta,\eta,\theta) \llangle \frac{\rhos (1-\porosity)}{\Delta t^2} \testu, \testu \rrangle 
  +
  \gamma_2(\zeta,\eta,\theta) \llangle \mathbb{C} \eps{\testu}, \eps{\testu} \rrangle,
 \end{align*}
 and thereby~\eqref{proof:undrained-convergence:aux-2} follows.
 
Finally, we show~\eqref{proof:undrained-convergence:aux-1}. By definition of $|\cdot|_\mathrm{s}$ it holds that
\begin{align*}
  \left| \testu \right|_\mathrm{s}^2 
  =&
  \llangle \frac{\rhos(1-\porosity)}{\Delta t^2} \testu, \testu \rrangle
  +
  \llangle \mathbb{C} \eps{\testu}, \eps{\testu} \rrangle 
  +
  T^\star.
 \end{align*}
 Hence,~\eqref{proof:undrained-convergence:aux-1a} follows from~\eqref{proof:undrained-convergence:aux-2}. By (i) definition of $|\cdot|_\mathrm{f}$, (ii) suitable addition with zero, and application of the Cauchy-Schwarz and Young's inequalities, and (iii)~\eqref{proof:undrained-convergence:aux-2}, it holds
 \begin{align*}
  \left| \testv \right|_\mathrm{f}^2
  &\underset{(i)}{=}
  \llangle \rhof \porosity \testv, \testv \rrangle
  +
  \Delta t \llangle \porosity 2 \mu_\mathrm{f} \eps{\testv}, \eps{\testv} \rrangle
  +
  N \left\| \Delta t \DIV \left( \porosity \testv \right) \right\|^2
  +
  \Delta t \llangle \porosity^2 \permeability^{-1} \testv, \testv \rrangle\\
  &\underset{(ii)}{\leq} 
     \llangle \rhof \porosity \testv, \testv \rrangle
  +
  \Delta t \llangle \porosity 2 \mu_\mathrm{f} \eps{\testv}, \eps{\testv} \rrangle
  +
  \left(1 + \gamma \right) \, N \left\| \Delta t \DIV \left(\porosity \testv \right) + \DIV \left((1-\porosity) \testu \right)\right\|^2 \\
  &\qquad + \left(1 + \gamma\right) \,
 \Delta t \llangle \porosity^2 \permeability^{-1} \left(\testv - \frac{1}{\Delta t} \testu \right), \left(\testv -   \frac{1}{\Delta t} \testu \right) \rrangle 
 + 
 \left(1 + \gamma^{-1} \right) T^\star\\
 &\underset{(iii)}{\leq} (1+\gamma) |(\testu, \testv)|^2.
  \end{align*}
 Hence, we obtain~\eqref{proof:undrained-convergence:aux-1b}, and thereby~$\mathrm{(A_1)}$.

 Ultimately, the assumptions of Lemma~\ref{Lemma:AM_convergence} are satisfied, and it follows for all $k\geq 1$ that
 \begin{align*}
   &\discreteenergy(\displacement^{n,k},\fluidvelocity^{n,k}) - 
\discreteenergy(\displacement^n,\fluidvelocity^n)
\leq 
\left( 1 - (1+\gamma)^{-1}\right)^2 \, \left( 
\discreteenergy(\displacement^{n,k-1},\fluidvelocity^{n,k-1}) - \discreteenergy(\displacement^n,\fluidvelocity^n) 
\right).
 \end{align*}
 Moreover, since $\mathcal{J}$ is quadratic, $(\displacement^n,\fluidvelocity^n)$ is a local minimum of 
$\mathcal{J}$, and $|\cdot|$ is induced by the functional Hessian of $\discreteenergy$ 
via~\eqref{Eq:proof-aux:second-derivative-energy}, we have that 
\begin{align*}
 \mathcal{J}(\displacement^{n,k},\fluidvelocity^{n,k}) - \mathcal{J}(\displacement^n,\fluidvelocity^n) = 2 
\left|(\displacement^{n,k} - \displacement^n, \fluidvelocity^{n,k} - \fluidvelocity^n)\right|^2
\end{align*}
for all $k\geq 0$. Thereby, the assertion follows. 
\end{proof}

\subsection{Convergence analysis of the diagonally \texorpdfstring{$L^2$}{L2}--stabilized split for the tangent model
}\label{fp-s-split}


The essence of the diagonally $L^2$--stabilized split~\eqref{fp-s-1}--\eqref{fp-s-2} is the decoupling of the mechanical displacement from the remaining variables (fluid pressure and velocity). Such a split does neither fully conform with a symmetry nor a saddle point structure of the governing equations. In view of a convergence analysis aiming at employing some contraction argument or similar, it therefore cannot be expected that all coupling terms can be simultaneously canceled by suitable testing as often done, cf., e.g.,~\cite{Both2017}. 
To mitigate this complication, the concept of relative stability will be exploited instead, allowing for a simpler discussion of the coupling terms. 
In the following, the analysis is presented in two steps: (i) a central abstract convergence result for positive real-valued sequences satisfying a relative stability property is introduced; (ii) the result is applied to the diagonally $L^2$--stabilized split~\eqref{fp-s-1}--\eqref{fp-s-2} to show \textit{a priori} convergence.

\subsubsection{Abstract convergence criterion based on relative stability}
Consider a real-valued (positive) sequence $\{ x_k\}_k \subset \mathbb{R}_+$ satisfying the stability property:
\begin{align}
\label{convergence-criterion-stability}
 \textit{There exists a constant }c\in(0,\infty)\textit{ such that }c \sum_{i=1}^\infty x_{k+i} \leq x_k\textit{ for all }k\in\mathbb{N},
\end{align}
without any additional requirement for the stability constant $c$. We call this property the \emph{relative stability} criterion for the sequence $\{ x_k\}_k \subset \mathbb{R}_+$.
This criterion ensures \textit{r-linear convergence for subsequences} (still wrt.\ the original sequence), a weaker form of standard r-linear convergence, covering both contractive and certain non-contractive sequences.

\begin{lemma}[r-linear convergence for subsequences\label{convergence-criterion-II}]
 Let $\{x_k\}_k\subset \mathbb{R}_+$ and $c\in(0,\infty)$ satisfy~\eqref{convergence-criterion-stability}. Then there exists a subsequence $\{ x_{k_l} \}_l$ with $1\leq k_{l+1} - k_l\leq \underset{m\in\mathbb{N}}{\mathrm{arg\,min}}\left(cm\right)^{-\frac{1}{m}}$, which converges r-linearly with
 \begin{align*}
  x_{k_l} \leq \left(\underset{m\in\mathbb{N}}{\mathrm{min}}\, \left(\frac{1}{cm}\right)^{\frac{1}{m}} \right)^{k_l} x_0.
 \end{align*}
\end{lemma}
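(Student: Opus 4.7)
The plan is to combine a pigeonhole-type averaging argument over a window of length $m$ with a recursive construction of the subsequence, and then optimize $m$.

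First, fix $m \in \mathbb{N}$. Since all $x_j \geq 0$, the relative stability assumption~\eqref{convergence-criterion-stability} yields, for every $k \in \mathbb{N}$,
\[
cm \cdot \min_{1 \leq i \leq m} x_{k+i} \;\leq\; c \sum_{i=1}^{m} x_{k+i} \;\leq\; c \sum_{i=1}^{\infty} x_{k+i} \;\leq\; x_k,
\]
so within any window $\{k+1,\ldots,k+m\}$ there exists an index $i_k \in \{1,\ldots,m\}$ with $x_{k+i_k} \leq (cm)^{-1} x_k$. I would then construct the subsequence recursively by setting $k_0 := 0$ and $k_{l+1} := k_l + i_{k_l}$; by construction $1 \leq k_{l+1} - k_l \leq m$ and $x_{k_{l+1}} \leq (cm)^{-1} x_{k_l}$, so a trivial induction gives
\[
x_{k_l} \;\leq\; (cm)^{-l}\, x_0, \qquad l \geq 0.
\]

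The second key step is to recast this as r-linear decay in the index $k_l$ rather than in the counter $l$. Since $k_{l+1} - k_l \leq m$, we have $k_l \leq l m$, i.e., $l \geq k_l / m$. Provided $cm \geq 1$ so that $(cm)^{-1/m} \leq 1$, I can write
\[
x_{k_l} \;\leq\; (cm)^{-l}\, x_0 \;=\; \bigl((cm)^{-1/m}\bigr)^{lm}\, x_0 \;\leq\; \bigl((cm)^{-1/m}\bigr)^{k_l}\, x_0,
\]
which is the desired r-linear estimate for the particular choice of $m$.

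Finally, one optimizes the free parameter $m$ by choosing $m = m^\star := \underset{m \in \mathbb{N}}{\arg\min}\, (cm)^{-1/m}$. Note that $(cm)^{-1/m} \to 1$ as $m \to \infty$ and $(cm)^{-1/m} < 1$ whenever $cm > 1$ (which certainly holds for all sufficiently large $m$, since $c > 0$), so the minimum over $\mathbb{N}$ is attained and satisfies $cm^\star \geq 1$, legitimizing the previous step; the asserted bounds on the gap $k_{l+1} - k_l$ and on $x_{k_l}$ follow immediately. The argument is almost entirely mechanical; the only subtle point worth attention is the transition from the $l$-indexed geometric bound to the $k_l$-indexed one, which requires the condition $cm \geq 1$ (so that raising to the power $1/m$ does not increase the base above~$1$) — this is precisely why minimizing $(cm)^{-1/m}$ over $m$, rather than just taking $m=1$, is both legitimate and sharper.
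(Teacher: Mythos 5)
Your proof is correct and follows essentially the same route as the paper: contract by the factor $(cm)^{-1}$ within each window of at most $m$ indices, build the subsequence recursively, convert the $l$-indexed geometric bound into a $k_l$-indexed one using $k_l \leq lm$, and finally optimize over $m$. The only cosmetic difference is that you obtain the window-contraction step by a direct pigeonhole/averaging inequality, whereas the paper isolates it as an auxiliary lemma (with a general $\varepsilon$) proved by contradiction — the underlying argument is the same.
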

For the proof of Lemma~\ref{convergence-criterion-II}, we state the following auxiliary result.

\begin{lemma}\label{convergence-criterion-I}
 Let $\{x_k\}_k\subset \mathbb{R}_+$ and $c>0$ satisfying~\eqref{convergence-criterion-stability}. Then for any $k\in\mathbb{N}$ and $\varepsilon>0$ there exists some $n \in \{0, 1, ..., \left\lceil \frac{1}{c\varepsilon} \right\rceil\}$ such that $x_{k+n} \leq \varepsilon x_k$.
\end{lemma}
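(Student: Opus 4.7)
The plan is to argue by contradiction using a direct averaging argument. The hypothesis asserts $\sum_{i=1}^\infty x_{k+i} \leq x_k / c$, and the conclusion bounds how long it can take for a term to drop below $\varepsilon x_k$. If too many consecutive terms $x_{k+1}, \ldots, x_{k+N}$ were all larger than $\varepsilon x_k$, their sum would exceed $N \varepsilon x_k$, and for $N = \lceil 1/(c\varepsilon)\rceil$ this lower bound already violates the stability bound $x_k/c$ on the full tail sum.

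Concretely, I would fix $k \in \mathbb{N}$ and $\varepsilon > 0$, set $N := \lceil 1/(c\varepsilon)\rceil$, and first dispose of the trivial case: if $x_k = 0$, then $n=0$ works. Otherwise $x_k > 0$. Assume, for a contradiction, that $x_{k+n} > \varepsilon x_k$ for every $n \in \{0,1,\ldots,N\}$. Restricting to $n \in \{1,\ldots,N\}$ and summing gives
\begin{equation*}
\sum_{i=1}^{N} x_{k+i} \,>\, N \varepsilon x_k \,\geq\, \tfrac{1}{c\varepsilon}\cdot \varepsilon x_k \,=\, \tfrac{x_k}{c},
\end{equation*}
where the second inequality uses $N \geq 1/(c\varepsilon)$ by definition of the ceiling.

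On the other hand, since every term of the sequence is in $\mathbb{R}_+$, the partial sum is bounded by the infinite tail sum, and the relative stability property~\eqref{convergence-criterion-stability} yields
\begin{equation*}
\sum_{i=1}^{N} x_{k+i} \,\leq\, \sum_{i=1}^{\infty} x_{k+i} \,\leq\, \tfrac{x_k}{c}.
\end{equation*}
Combining the two estimates produces $x_k/c < x_k/c$, which is the desired contradiction. Hence some $n \in \{0,1,\ldots,N\}$ satisfies $x_{k+n} \leq \varepsilon x_k$.

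There is no real obstacle: the statement is essentially a pigeonhole/averaging consequence of the summability built into~\eqref{convergence-criterion-stability}. The only point requiring a tiny bit of care is the use of the ceiling, ensuring $N\varepsilon \geq 1/c$, and the observation that nonnegativity of the sequence is what lets us pass from the partial sum $\sum_{i=1}^{N}$ to the infinite sum $\sum_{i=1}^{\infty}$.
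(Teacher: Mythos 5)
Your proof is correct and follows essentially the same contradiction argument as the paper: assume all of $x_{k+1},\dots,x_{k+\lceil 1/(c\varepsilon)\rceil}$ exceed $\varepsilon x_k$, bound the partial sum below by $\lceil 1/(c\varepsilon)\rceil\,\varepsilon x_k \geq x_k/c$, and contradict the relative stability bound on the infinite tail (your explicit treatment of the case $x_k=0$ is a minor, harmless addition to the paper's ``without loss of generality'').
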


\begin{proof}
 Let $k\in\mathbb{N}$ and $\varepsilon>0$ be arbitrary but fixed. Assume without loss of generality that $x_k>0$. 
 Then the assertion follows by contradiction: Assume it holds
  $x_{k+i} > \varepsilon x_k$, for all $i=1,...,\left\lceil \frac{1}{c\varepsilon} \right\rceil$; we conclude that it holds
 \begin{align*}
  c \sum_{i=1}^\infty x_{k+i} \geq c \sum_{i=1}^{\left\lceil \frac{1}{c\varepsilon} \right\rceil}
  x_{k+i} > c\varepsilon \, \left\lceil \frac{1}{c\varepsilon} \right\rceil \, x_k \geq x_k,
 \end{align*}
 which contradicts~\eqref{convergence-criterion-stability}.
\end{proof}

\begin{proof}[Proof of Lemma~\ref{convergence-criterion-II}]
 The idea of the proof is to employ Lemma~\ref{convergence-criterion-I} and construct a subsequence of $\{x_k\}_k$, which is linearly (first order) quotient converging, and then conclude r-linear convergence wrt.\ the original sequence. Assume without loss of generality that $x_0 > 0$. Let $m\in\mathbb{N}$ such that $cm > 1$, and let $\varepsilon:= \frac{1}{cm} < 1$, such that $\left\lceil \frac{1}{c\varepsilon} \right\rceil = m$. By Lemma~\ref{convergence-criterion-I} there exists some $n_1 \in \{1, ..., m\}$ such that it holds
$
  x_{n_1} \leq \varepsilon x_0.
$  
 Analogously, for any $i=2, ...$, there exists some $n_i \in \{1, ..., m\}$, satisfying
$
  x_{\sum_{j=1}^{i-1} n_j + n_i} \leq \varepsilon x_{\sum_{j=1}^{i-1} n_j}.
$

Next, we define $\{k_l\}_l\subset \mathbb{N}$ by setting $k_l := \sum_{j=1}^l n_j$ for all $l\in\mathbb{N}$. Since $\varepsilon < 1$ and $n_j \leq m$ for all $j$, it holds that
 \begin{align*}
  \varepsilon^\frac{l}{k_l} = \varepsilon^\frac{l}{\sum_{j=1}^l n_j} \leq \varepsilon^\frac{l}{l\, m} = \left(\frac{1}{cm}\right)^\frac{1}{m}.
 \end{align*}
For $\{x_{k_l}\}_l$, we conclude
 \begin{align*}
  x_{k_l} 
  \leq \varepsilon^l x_0 = \left(\varepsilon^\frac{l}{k_l}\right)^{k_l} x_0 
  \leq \left(\left(\frac{1}{cm}\right)^\frac{1}{m}\right)^{k_l} x_0.
 \end{align*}
 for, so far, arbitrary $m\in\mathbb{N}$. Minimizing the right hand side wrt.\ $m$ ultimately yields the assertion.
\end{proof} 

\subsubsection{Convergence analysis of the diagonally \texorpdfstring{$L^2$}{L2}--stabilized split based on the concept of relative stability}
In the following, we establish linear convergence of the diagonally $L^2$--stabilized two-way split~\eqref{fp-s-1}--\eqref{fp-s-2}. The primary aim of the analysis is to determine ranges for the stabilization parameters $\solidstabilization$, $\fluidstabilization$ and $\pressurestabilization$, which \textit{a priori} guarantee convergence; in addition, we are going to suggest a practical (for simplicity of the presentation not necessarily optimally tuned) set of values. The reader interested in the analysis of optimal convergence rate is referred to analogous studies of the fixed-stress split for the quasi-static Biot equations~\cite{Storvik2019}.

For the convergence analysis, the concept of relative stability and r-linear convergence for subsequences introduced in the previous section is applied. Ultimately, the final result states that it is sufficient to stabilize the mass conservation equation along the lines of the fixed-stress split for the quasi-static Biot equations~\cite{Kim2011,Kim2009,Both2017,Storvik2019}, in order to guarantee convergence. Additional destabilization, i.e., negative stabilization, of the momentum equation for the solid phase theoretically improves the convergence speed. Fluid (de-)stabilization does not further improve the convergence rate.  

To ease the presentation of the analysis, we introduce two notations:
\begin{itemize}
    \item[(N1)] \emph{Weighted squares} $\weightedsquare{\cdot}{\bm{A}}$, defined by $\weightedsquare{\bm{\omega}}{\bm{A}}:= \llangle \bm{A} \bm{\omega}, \bm{\omega} \rrangle$,
 where $\bm{\omega}$ can be a tensor-, vector- or scalar-valued function on $\Omega$, and the weight $\bm{A}$ is a (potentially non-positive definite) function on $\Omega$ with adequate dimensionality such that the above definition is well-defined.
 
 \item[(N2)] \emph{Weighted $L^2$ norms} $\|\cdot\|_{\bm{A}}$ for uniformly positive definite $\bm{A}$, defined by $\|\cdot\|_{\bm{A}}^2:= \weightedsquare{\cdot}{\bm{A}}$.

\end{itemize}

Finally, we state the convergence result for the diagonally $L^2$--stabilized split~\eqref{fp-s-1}--\eqref{fp-s-2}.

\begin{theorem}[Relative stability and convergence of the diagonally $L^2$--stabilized two-way split]\label{theorem:l2s-convergence}
Let $\incu{k}:=\displacement^{n,k} - \displacement^{n,k-1}$, $\incv{k}:=\fluidvelocity^{n,k} - \fluidvelocity^{n,k-1}$, and $\incp{k}:= p^{n,k} - p^{n,k-1}$ denote increments for $k\geq 1$. Furthermore, let $K_\mathrm{dr,\porosity,min}$ and $C_\mathrm{Korn,1}$ as defined in~\eqref{eq:bulk-modulus} and~\eqref{eq:korn1}, resp.; let $\delta_1>0$ and $\delta_2\in(0,2)$ be tuning parameters; and let the stabilization parameters satisfy
 \begin{align}
\label{fp-s-stabilization-parameters}
  \solidstabilization \succeq - \frac{\porosity^2 \permeability^{-1}}{2\Delta t}, \quad\qquad 
  \fluidstabilization \succeq \bm{0},\qquad\quad 
  \pressurestabilization \geq \frac{1}{\delta_2\, \Delta t}\left( C_\mathrm{Korn,1}^{1/2} + K_\mathrm{dr,\porosity,min}^{-1/2} \right)^2,
 \end{align}
where $\bm{A} \succeq \bm{B}$ for tensor-valued maps $\bm{A}$ and $\bm{B}$ on $\Omega$ iff.\ $\bm{A} - \bm{B}$ is uniformly positive definite. Then  the scheme~\eqref{fp-s-1}--\eqref{fp-s-2} satisfies a relative stability criterion of type \eqref{convergence-criterion-stability}, namely
 \begin{align}
    \label{fp-s-stability}
    &\sum_{k=m+1}^{\infty} \left[ \frac{1}{\Delta t^2} \| \incu{k} \|_{\rhos(1-\porosity)}^2 
    +
    \left(1 - \frac{\delta_2}{2} \right) \left\| \eps{\incu{k}} \right\|_{\mathbb{C}}^2 \right]\\
    \nonumber
    &\qquad 
    +
    \sum_{k=m+1}^{\infty} \left[
    \| \incv{k}\|_{\rhof\porosity}^2 +  \Delta t\,  \left\| \eps{\incv{k}} \right\|_{\porosity 2\mu_\mathrm{f}}^2 
    +
    \left\| \incp{k} \right\|_{\frac{(1-\porosity)^2}{\kappa_\mathrm{s}}}^2 \right] \\
    &\quad \leq
    \frac{1}{2} \left\| \incu{m} \right\|_{\hatsolidstabilization}^2
    +
    \frac{\delta_1 + \delta_2}{2} \left\| \eps{\incu{m}} \right\|_{\mathbb{C}}^2
    +
    \frac{\Delta t}{2} \left\| \incv{m} \right\|_{\fluidstabilization}^2
    + 
   \frac{\Delta t}{2}  
    \left\| \incp{m} \right\|_{\hatpressurestabilization}^2,\quad \text{for all }m\in\mathbb{N},
    \nonumber
    \end{align}
    where $\hatsolidstabilization$ and  $\hatpressurestabilization$ denote augmented stabilization parameters (introduced for simpler presentation)
 \begin{equation*}
     \hatsolidstabilization := \solidstabilization + \frac{\porosity^2 \permeability^{-1}}{2\Delta t},
     \qquad
     \hatpressurestabilization := \pressurestabilization + \frac{1}{\delta_1\, \Delta t}\left( C_\mathrm{Korn,1}^{1/2} + K_\mathrm{dr,\porosity,min}^{-1/2} \right)^2.
 \end{equation*}
    If $\tfrac{(1-\porosity)^2}{\kappa_\mathrm{s}}$ is uniformly positive,
    subsequences of $\incu{k},\, \incv{k},\, \incp{k}$ r-linearly converge to zero, in the sense of Lemma~\ref{convergence-criterion-II}.
\end{theorem}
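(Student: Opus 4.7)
The plan is an energy estimate on the increment system, combined with the abstract r-linear convergence tool of Lemma~\ref{convergence-criterion-II}. I would first subtract two consecutive iterates of~\eqref{fp-s-1}--\eqref{fp-s-2}: since the data is iteration-independent, this produces a homogeneous linear problem for $(\incu{k},\incv{k},\incp{k})$ that inherits the asymmetric coupling introduced by the split. I would then test the solid increment equation by $\incu{k}$, the fluid increment equation by $\Delta t\,\incv{k}$, and the mass balance increment by $\Delta t\,\incp{k}$, and sum. Each stabilization term of the form $\llangle \bm{A}(a-b),a\rrangle$ would be rewritten via the identity $\llangle \bm{A}(a-b),a\rrangle = \tfrac12\weightedsquare{a}{\bm{A}} - \tfrac12\weightedsquare{b}{\bm{A}} + \tfrac12\weightedsquare{a-b}{\bm{A}}$; the first two pieces will telescope after summation in $k$, while the last piece is non-negative and can be dropped.

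The crux of the argument is the control of the non-cancelling cross terms. The displacement-pressure coupling contributes $-\llangle \incp{k},\DIV((1-\porosity)\incu{k})\rrangle$ from the solid test and $\llangle \DIV((1-\porosity)\incu{k-1}),\incp{k}\rrangle$ from the mass test; the lagged $\incu{k-1}$ in~\eqref{fp-s-1} is precisely why the sum fails to vanish. I would bound both contributions with Young's inequality using weights $\delta_2$ and $\delta_1$, respectively, together with the estimate $\|\DIV((1-\porosity)w)\|\leq (C_{\mathrm{Korn},1}^{1/2}+K_{\mathrm{dr},\porosity,\min}^{-1/2})\|\eps{w}\|_\mathbb{C}$ that follows directly from~\eqref{eq:bulk-modulus} and~\eqref{eq:korn1}. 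The bounds on the $\|\eps{\cdot}\|_\mathbb{C}^2$ side produce the factor $(1-\delta_2/2)$ on the LHS and, after an index shift in the summation, the residual contribution $\tfrac{\delta_1+\delta_2}{2}\|\eps{\incu{m}}\|_\mathbb{C}^2$ on the RHS; the two $\|\incp{k}\|^2$ contributions are absorbed via the assumed lower bound on $\pressurestabilization$ in~\eqref{fp-s-stabilization-parameters}, with the portion that cannot be absorbed pointwise in $k$ collected into $\hatpressurestabilization$ at the boundary index $m$. An analogous Young's splitting handles the solid-fluid friction coupling through $\porosity^2\permeability^{-1}$: the hypothesis $\solidstabilization\succeq -\porosity^2\permeability^{-1}/(2\Delta t)$ is exactly what makes the effective stabilization $\hatsolidstabilization$ positive semi-definite and ensures the corresponding boundary contribution takes the required form.

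Summing the resulting pointwise inequality from $k=m+1$ to $k=N$ collapses the telescoping contributions into boundary terms at the two endpoints; the contributions at index $N$ are non-negative and are discarded, and passing $N\to\infty$ yields the relative stability estimate~\eqref{fp-s-stability}. For the convergence claim, the uniform positivity assumption on $(1-\porosity)^2/\kappa_\mathrm{s}$ ensures that the LHS of~\eqref{fp-s-stability} controls the same combination of norms as the RHS up to a fixed positive multiplicative constant; hence the scalar sequence defined as the RHS of~\eqref{fp-s-stability} evaluated at index $k$ satisfies the abstract relative stability property~\eqref{convergence-criterion-stability}, and Lemma~\ref{convergence-criterion-II} delivers the r-linear convergence of a subsequence of $(\incu{k},\incv{k},\incp{k})$ to zero. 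The main difficulty is the bookkeeping in the second paragraph: because the split respects neither the symmetry nor the skew-symmetry of the underlying problem, each non-cancelling cross coupling has to be routed asymmetrically with two tuning parameters, and the augmented stabilizations $\hatsolidstabilization$ and $\hatpressurestabilization$ are precisely those objects that absorb the residual contributions that cannot be balanced pointwise in $k$.
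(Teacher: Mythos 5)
Your overall skeleton coincides with the paper's proof: subtract consecutive iterations, test the increment system with $\incu{k}$, $\Delta t\,\incv{k}$, $\Delta t\,\incp{k}$, treat the stabilization terms with the binomial identity and telescoping, and finish with Lemma~\ref{convergence-criterion-II}. The genuine gap is in your "crux" paragraph, i.e.\ the treatment of the $\displacement$--$p$ coupling. If you bound $-\llangle \incp{k},\DIV((1-\porosity)\incu{k})\rrangle$ and $+\llangle \incp{k},\DIV((1-\porosity)\incu{k-1})\rrangle$ separately by Young's inequality at every iteration, the remainder is of the form $c\sum_{k>m}\|\incp{k}\|^2$ with $c\sim\tfrac{1}{\delta}(C_\mathrm{Korn,1}^{1/2}+K_\mathrm{dr,\porosity,min}^{-1/2})^2$, and nothing on the left-hand side can absorb it: after telescoping, the $\pressurestabilization$-stabilization only provides the boundary value at the final index and the sum of squared \emph{differences} $\|\incp{k}-\incp{k-1}\|^2_{\pressurestabilization}$, while the storage term carries the weight $\tfrac{(1-\porosity)^2}{\kappa_\mathrm{s}}$, which may degenerate (the stability estimate \eqref{fp-s-stability} is asserted without uniform positivity) and is in any case not related to $c$ by the hypotheses \eqref{fp-s-stabilization-parameters}. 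Your proposal to collect "the portion that cannot be absorbed pointwise in $k$" into $\hatpressurestabilization$ at the single boundary index $m$ cannot work, because that is one fixed term whereas the unabsorbed quantity grows with the number of iterations. Moreover, even setting absorption aside, your symmetric per-$k$ splitting charges $\tfrac{\delta_1+\delta_2}{2}\|\eps{\incu{k}}\|_{\mathbb{C}}^2$ at every interior index, so you would obtain the prefactor $1-\tfrac{\delta_1+\delta_2}{2}$ rather than $1-\tfrac{\delta_2}{2}$, and the stated admissible ranges $\delta_1>0$, $\delta_2\in(0,2)$ would not suffice.

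The missing idea is summation by parts in the iteration index: the paper first combines the two coupling contributions into $\sum_k\llangle\incp{k},\DIV((1-\porosity)(\incu{k}-\incu{k-1}))\rrangle$ and Abel-sums so that the difference lands on the pressure. Young's inequality with $\delta_2$ then produces $\sum_k\|\incp{k}-\incp{k-1}\|^2$, which is exactly what the assumed lower bound on $\pressurestabilization$ absorbs, plus two boundary terms: the one at index $m$ is Young-bounded with $\delta_1$ (this is the sole source of $\hatpressurestabilization$ and of the $\tfrac{\delta_1}{2}\|\eps{\incu{m}}\|_{\mathbb{C}}^2$ contribution), and the one at the final index is handled by re-testing the solid increment equation at that index, where $\hatsolidstabilization\succeq\bm{0}$ makes the resulting block nonnegative. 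Relatedly, the friction coupling with the lagged $\incu{k-1}$ is handled in the paper by an exact completion of squares (yielding the $\tfrac{1}{2\Delta t}\|\incu{m}\|^2_{\porosity^2\permeability^{-1}}$ boundary term that builds $\hatsolidstabilization$), not by a Young splitting; your sketch of that step is vague but repairable, whereas the pressure-coupling step as written fails and needs the summation-by-parts argument.
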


\begin{proof} 
 The proof is organized in five steps, starting with governing equations for increments.

 \paragraph{Increment equations} By subtracting~\eqref{fp-s-1}--\eqref{fp-s-2} at iteration $k$ and $k-1$, $k\geq 2$, we obtain
\begin{subequations}
 \label{fp-s-analysis-1}
\begin{align}
\label{fp-s-analysis-1a}
  &\llangle \rhos(1-\porosity)\frac{\incu{k}}{\Delta t^2}, \testu \rrangle  + \llangle \mathbb{C} \, \eps{\incu{k}}, \eps{\testu} \rrangle
  +   \llangle \solidstabilization(\incu{k} - \incu{k-1}), \testu \rrangle \\
  \nonumber
  &\qquad - \llangle \incp{k}, \DIV \left((1-\porosity) \testu \right) \rrangle - \llangle \porosity^2 \permeability^{-1} \left(\incv{k} - \frac{\incu{k}}{\Delta t} \right), \testu \rrangle = 0,\\[8pt]
\label{fp-s-analysis-1b}
 &\llangle \rhof\porosity\frac{\incv{k}}{\Delta t}, \testv \rrangle +  \llangle \porosity 2\mu_\mathrm{f}\, \eps{\incv{k}}, \eps{\testv} \rrangle
 + \llangle  \fluidstabilization (\incv{k} - \incv{k-1}), \testv \rrangle \\
 \nonumber
 &\qquad- \llangle \incp{k}, \DIV \left(\porosity\testv \right) \rrangle 
 + \llangle \porosity^2 \permeability^{-1} \left(\incv{k} - \frac{\incu{k-1}}{\Delta t} \right), \testv \rrangle  = 0,\\[8pt]
\label{fp-s-analysis-1c}
 &\llangle \frac{(1-\porosity)^2}{\kappa_\mathrm{s}} \frac{\incp{k}}{\Delta t}, \testp \rrangle  
 +  \llangle \pressurestabilization(\incp{k} - \incp{k-1}), \testp \rrangle \\
 \nonumber
 &\qquad + \llangle \DIV\, \left( \porosity \incv{k}\right), \testp \rrangle + \llangle \DIV\,\left( \left(1-\porosity\right) \frac{\incu{k-1}}{\Delta t} \right), \testp \rrangle  
= 0.
\end{align}
\end{subequations}

\paragraph{Testing with current increments}
Testing and summing~\eqref{fp-s-analysis-1} with $\testu=\incu{k}$, $\testv=\Delta t\,\incv{k}$, and $\testp=\Delta t\,\incp{k}$, and finally summing over indices $k=m+1,...,M$, for arbitrary $m<M$, yields
\begin{align}
\nonumber&\sum_{k=m+1}^M \left[ \frac{1}{\Delta t^2} \| \incu{k} \|_{\rhos(1-\porosity)}^2 
+
\left\| \eps{\incu{k}} \right\|_{\mathbb{C}}^2
+
\| \incv{k}\|_{\rhof\porosity}^2 +  \Delta t\,  \left\| \eps{\incv{k}} \right\|_{\porosity 2\mu_\mathrm{f}}^2 
+
\left\| \incp{k} \right\|_{\frac{(1-\porosity)^2}{\kappa_\mathrm{s}}}^2 \right] \\
&\qquad + T_1 = T_2 + T_3
\label{fp-s-analysis-2}
\end{align}
(employing notation (N2) and) with 
\begin{align*}
    T_1 &= \sum_{k=m+1}^M  \left[ 
\llangle \solidstabilization(\incu{k} - \incu{k-1}), \incu{k} \rrangle
+
\Delta t\,  \llangle \fluidstabilization(\incv{k} - \incv{k-1}), \incv{k} \rrangle
+
\Delta t\,  \llangle \pressurestabilization(\incp{k} - \incp{k-1}), \incp{k} \rrangle 
\right],\\
 T_2 &= \sum_{k=m+1}^M   \llangle \incp{k}, \DIV \left((1-\porosity) (\incu{k} - \incu{k-1}) \right) \rrangle, \\
 T_3 &= \Delta t \sum_{k=m+1}^M \left[\llangle \porosity^2 \permeability^{-1} \left(\incv{k}
- \frac{\incu{k}}{\Delta t} \right), \frac{\incu{k}}{\Delta t} \rrangle
 - \llangle \porosity^2 \permeability^{-1} \left(\incv{k} - \frac{\incu{k-1}}{\Delta t} \right), \incv{k} \rrangle \right].
 \end{align*}
We discuss the terms $T_\mathrm{1}$, $T_2$ and $T_3$ separately. For the stabilization term $T_\mathrm{1}$, we apply binomial identities of type $(a-b)a = \frac{1}{2} \left( a^2 - b^2 + (a-b)^2\right)$ and telescope sums, resulting in
\begin{align}
\nonumber
 T_\mathrm{1} &= \frac{1}{2} \left[\weightedsquare{\incu{M}}{\solidstabilization} - \weightedsquare{\incu{m}}{\solidstabilization} + \sum_{k=m+1}^M \weightedsquare{\incu{k} - \incu{k-1}}{\solidstabilization} \right]\\
 \nonumber
 &\qquad
 +
 \frac{\Delta t}{2} \left[\weightedsquare{\incv{M}}{\fluidstabilization} - \weightedsquare{\incv{m}}{\fluidstabilization} + \sum_{k=m+1}^M \weightedsquare{\incv{k} - \incv{k-1}}{\fluidstabilization} \right] \\
 &\qquad +
 \frac{\Delta t}{2} \left[\weightedsquare{\incp{M}}{\pressurestabilization} - \weightedsquare{\incp{m}}{\pressurestabilization} + \sum_{k=m+1}^M \weightedsquare{\incp{k} - \incp{k-1}}{\pressurestabilization} \right].
\label{fp-s-analysis-3}
\end{align}
(employing notation (N1)). For the coupling term $T_2$ we apply summation by parts, leading to
\begin{align}
\label{fp-s-analysis-4}
 T_2 &= 
\llangle \incp{M}, \DIV \left( (1-\porosity) \incu{M} \right) \rrangle
 -
 \llangle \incp{m}, \DIV \left((1-\porosity) \incu{m} \right) \rrangle\\
 \nonumber
 &\quad 
 -
 \sum_{k=m+1}^M   \llangle \incp{k} - \incp{k-1}, \DIV \left((1-\porosity) \incu{k-1} \right) \rrangle.
\end{align}
For the coupling term $T_3$, simple expansion and reformulation, aiming at constructing quadratic terms present on the left hand side of~\eqref{fp-s-analysis-2}, and gathering those, respectively, results in
\begin{align}
\label{fp-s-analysis-5}
 T_3 &= - \Delta t \sum_{k=m+1}^M \left\|\incv{k} - \frac{\incu{k} + \incu{k-1}}{2 \Delta t}\right\|^2_{\porosity^2\permeability^{-1}} - \frac{1}{2\Delta t} \left\| \incu{M} \right\|^2_{\porosity^2\permeability^{-1}} + \frac{1}{2\Delta t} \left\| \incu{m} \right\|^2_{\porosity^2\permeability^{-1}}\\
 \nonumber 
 &\qquad 
 - \frac{1}{4\Delta t} \sum_{k=m+1}^M \left\| \incu{k} - \incu{k-1} \right\|^2_{\porosity^2\permeability^{-1}}.
\end{align}
Inserting~\eqref{fp-s-analysis-3}--\eqref{fp-s-analysis-5} into~\eqref{fp-s-analysis-2} and re-ordering terms, yields
\begin{align}
\nonumber
&\sum_{k=m+1}^M \left[ \frac{1}{\Delta t^2} \| \incu{k} \|_{\rhos(1-\porosity)}^2 
+
\left\| \eps{\incu{k}} \right\|_{\mathbb{C}}^2
+
\| \incv{k}\|_{\rhof\porosity}^2 +  \Delta t\,  \left\| \eps{\incv{k}} \right\|_{\porosity 2\mu_\mathrm{f}}^2 
+
\left\| \incp{k} \right\|_{\frac{(1-\porosity)^2}{\kappa_\mathrm{s}}}^2 \right] \\
\nonumber
&\qquad +
\frac{1}{2} \left[\weightedsquare{\incu{M}}{\solidstabilization} + \sum_{k=m+1}^M \weightedsquare{\incu{k} - \incu{k-1}}{\solidstabilization} \right] + 
\frac{\Delta t}{2} \left[\weightedsquare{\incv{M}}{\fluidstabilization} + \sum_{k=m+1}^M \weightedsquare{\incv{k} - \incv{k-1}}{\fluidstabilization} \right]\\
\nonumber
&\qquad + \frac{\Delta t}{2} \left[\weightedsquare{\incp{M}}{\pressurestabilization} + \sum_{k=m+1}^M \weightedsquare{\incp{k} - \incp{k-1}}{\pressurestabilization} \right] +
\Delta t \sum_{k=m+1}^M \left\|\incv{k} - \frac{\incu{k} + \incu{k-1}}{2 \Delta t}\right\|^2_{\porosity^2\permeability^{-1}} 
\\
 \nonumber
 &\qquad
 + \frac{1}{2\Delta t} \left\| \incu{M} \right\|^2_{\porosity^2\permeability^{-1}}
 +
 \frac{1}{4\Delta t} \sum_{k=m+1}^M \left\| \incu{k} - \incu{k-1} \right\|^2_{\porosity^2\permeability^{-1}}  -
\underbrace{\llangle \incp{M}, \DIV \left( (1-\porosity) \incu{M} \right) \rrangle}_{=:T_4}
\\
\nonumber
&\qquad =
 \underbrace{-\llangle \incp{m}, \DIV \left((1-\porosity) \incu{m} \right) \rrangle}_{=:T_\mathrm{5a}}
 \underbrace{-\sum_{k=m+1}^M   \llangle \incp{k} - \incp{k-1}, \DIV \left((1-\porosity) \incu{k-1} \right) \rrangle}_{=:T_\mathrm{5b}} \\
&\qquad +
\frac{1}{2\Delta t} \left\| \incu{m} \right\|^2_{\porosity^2\permeability^{-1}}+ \frac{1}{2} \weightedsquare{\incu{m}}{\solidstabilization}
+
 \frac{\Delta t}{2} \weightedsquare{\incv{m}}{\fluidstabilization}
 +
 \frac{\Delta t}{2} \weightedsquare{\incp{m}}{\pressurestabilization}.
\label{fp-s-analysis-6}
\end{align}
We discuss the coupling terms $T_4$, $T_\mathrm{5a}$ and $T_\mathrm{5b}$ separately in the two following steps.
\paragraph{Revisiting the increment equation for the solid for the last iteration} The coupling term $T_4$ combined with terms in~\eqref{fp-s-analysis-6}, involving $\incu{M}$, constitutes a positive contribution. Indeed, (i) revisiting~\eqref{fp-s-analysis-1a} tested with $\testu=\incu{M}$, (ii) suitable expansion and reformulation, and ultimately (iii) discarding some positive terms and employing the definition of $\hatsolidstabilization$, yields for all terms of~\eqref{fp-s-analysis-6} involving $\incu{M}$
  \begin{align}
  \nonumber
   & \frac{1}{\Delta t^2} \| \incu{M} \|_{\rhos(1-\porosity)}^2 
    +
   \left\| \eps{\incu{M}} \right\|_{\mathbb{C}}^2
   +
   \frac{1}{2} \weightedsquare{\incu{M}}{\solidstabilization}
   + 
   \frac{1}{2} \weightedsquare{\incu{M} - \incu{M-1}}{\solidstabilization}
   -
   T_4\\
  \nonumber
   &\qquad + \Delta t \left\|\incv{M} - \frac{\incu{M} + \incu{M-1}}{2 \Delta t}\right\|^2_{\porosity^2\permeability^{-1}}
   +
   \frac{1}{2\Delta t} \left\| \incu{M} \right\|^2_{\porosity^2\permeability^{-1}}
   +
   \frac{1}{4\Delta t} \left\| \incu{M} - \incu{M-1} \right\|^2_{\porosity^2\permeability^{-1}}\\
  \nonumber
   &\quad \underset{(i)}{=}
   \frac{1}{2} \weightedsquare{\incu{M-1}}{\solidstabilization}
   +
   \Delta t \llangle \porosity^2 \permeability^{-1} \left( \incv{M} - \frac{\incu{M}}{\Delta t} \right), \frac{\incu{M}}{\Delta t} \rrangle\\
  \nonumber
   &\qquad +
   \Delta t \left\|\incv{M} - \frac{\incu{M} + \incu{M-1}}{2 \Delta t}\right\|^2_{\porosity^2\permeability^{-1}}
   +
   \frac{1}{2\Delta t} \left\| \incu{M} \right\|^2_{\porosity^2\permeability^{-1}}
   +
   \frac{1}{4\Delta t} \left\| \incu{M} - \incu{M-1} \right\|^2_{\porosity^2\permeability^{-1}}\\
  \nonumber
   &\quad 
   \underset{(ii)}{=}
   \frac{1}{2} \weightedsquare{\incu{M-1}}{\solidstabilization}
   +
   \frac{1}{2\Delta t} \left\| \incu{M-1} \right\|_{\porosity^2\permeability^{-1}}^2 + \Delta t \left\| \incv{M} - \frac{\incu{M-1}}{2\Delta t} \right\|_{\porosity^2\permeability^{-1}}^2
   +\frac{\Delta t}{2} \left\| \incv{M} \right\|_{\porosity^2\permeability^{-1}}^2\\
   &\quad 
   \underset{(iii)}{\geq} 
   \frac{1}{2} \weightedsquare{\incu{M-1}}{\hatsolidstabilization}.
   \label{fp-s-analysis-7a}
  \end{align}
  \paragraph{Bounding coupling terms $T_\mathrm{5a}$ and $T_\mathrm{5b}$} 
  We employ (i) the product rule, (ii) the definitions of $K_\mathrm{dr,\porosity,min}$ and $C_\mathrm{Korn,1}$, cf.~\eqref{eq:bulk-modulus} and~\eqref{eq:korn1}, and (iii) the Cauchy-Schwarz and Young's inequalities. After all, for any $\delta_1>0$, we bound $T_\mathrm{5a}$
  \begin{align}
  \nonumber
   T_\mathrm{5a}
   &\underset{(i)}{=}
   \llangle \incp{m}, \GRAD \porosity \cdot \incu{m} \rrangle 
   -
   \llangle \incp{m}, (1-\porosity) \DIV \incu{m} \rrangle\\
   \nonumber
   &\underset{(ii)}{\leq}
   \left\|\incp{m} \right\| \left( C_\mathrm{Korn,1}^{1/2} + K_\mathrm{dr,\porosity,min}^{-1/2} \right) \left\| \eps{\incu{m}} \right\|_{\mathbb{C}}\\
   &\underset{(iii)}{\leq} 
   \frac{\delta_1}{2} \left\| \eps{\incu{m}} \right\|_{\mathbb{C}}^2
   +
   \frac{1}{2\delta_1}\left( C_\mathrm{Korn,1}^{1/2} + K_\mathrm{dr,\porosity,min}^{-1/2} \right)^2   
   \left\|\incp{m} \right\|^2.
   \label{fp-s-analysis-7b}
  \end{align}
  Similarly for $T_\mathrm{5b}$, we obtain for any $\delta_2>0$
  \begin{align}
  \label{fp-s-analysis-7c}
   T_\mathrm{5b}
   \underset{(i)-(iii)}{\leq}
   \frac{\delta_2}{2} \sum_{k=m}^{M-1} \left\| \eps{\incu{k}} \right\|_{\mathbb{C}}^2
   +
   \frac{1}{2\delta_2} \left( C_\mathrm{Korn,1}^{1/2} + K_\mathrm{dr,\porosity,min}{-1/2} \right)^2 
   \sum_{k=m+1}^M\left\|\incp{k} - \incp{k-1} \right\|^2.
  \end{align}
  
  \paragraph{Conclusion of relative stability} Inserting~\eqref{fp-s-analysis-7a}--\eqref{fp-s-analysis-7c} into~\eqref{fp-s-analysis-6} and employing $\hatpressurestabilization$, yields
  \begin{align}
    \nonumber
    &\sum_{k=m+1}^{M-1} \left[ \frac{1}{\Delta t^2} \| \incu{k} \|_{\rhos(1-\porosity)}^2 
    +
    \left(1 - \frac{\delta_2}{2} \right) \left\| \eps{\incu{k}} \right\|_{\mathbb{C}}^2 \right]\\
    \nonumber
    &\qquad 
    +
    \sum_{k=m+1}^{M} \left[
    \| \incv{k}\|_{\rhof\porosity}^2 +  \Delta t\,  \left\| \eps{\incv{k}} \right\|_{\porosity 2\mu_\mathrm{f}}^2 
    +
    \left\| \incp{k} \right\|_{\frac{(1-\porosity)^2}{\kappa_\mathrm{s}}}^2 \right] \\
    \nonumber
    &\qquad +
    \frac{1}{2} \weightedsquare{\incu{M-1}}{\hatsolidstabilization}
    +
    \frac{1}{2}\sum_{k=m+1}^{M-1} \weightedsquare{\incu{k} - \incu{k-1}}{\hatsolidstabilization}
    +
    \Delta t \sum_{k=m+1}^{M-1} \left\|\incv{k} - \frac{\incu{k} + \incu{k-1}}{2 \Delta t}\right\|^2_{\porosity^2\permeability^{-1}}
    \\
    \nonumber
    &\qquad +
    \frac{\Delta t}{2} \left[\weightedsquare{\incv{M}}{\fluidstabilization} + \sum_{k=m+1}^M \weightedsquare{\incv{k} - \incv{k-1}}{\fluidstabilization} \right]\\
    \nonumber
    &\qquad 
    +
    \frac{\Delta t}{2} \left[\weightedsquare{\incp{M}}{\pressurestabilization} + \sum_{k=m+1}^M 
    \weightedsquare{\incp{k} - \incp{k-1}}{\pressurestabilization - \frac{1}{\delta_2\, \Delta t}\left( C_\mathrm{Korn,1}^{1/2} + K_\mathrm{dr,\porosity,min}^{-1/2} \right)^2}
    \right]\\
    &\quad \leq
    \frac{1}{2} \weightedsquare{\incu{m}}{\hatsolidstabilization}
    +
    \frac{\delta_1 + \delta_2}{2} \left\| \eps{\incu{m}} \right\|_{\mathbb{C}}^2 + \frac{\Delta t}{2} \weightedsquare{\incv{m}}{\fluidstabilization} +
   \frac{\Delta t}{2} 
    \weightedsquare{\incp{m}}{\hatpressurestabilization}. 
    \label{fp-s-analysis-8}
    \end{align}
    Finally, after choosing $\solidstabilization$, $\fluidstabilization$ and $\pressurestabilization$ satisfying~\eqref{fp-s-stabilization-parameters} (in particular translating to $\hatsolidstabilization\succeq\bm{0}$), and dropping several positive terms in~\eqref{fp-s-analysis-8}, we obtain the stability result
  \begin{align*}
    &\sum_{k=m+1}^{M-1} \left[ \frac{1}{\Delta t^2} \| \incu{k} \|_{\rhos(1-\porosity)}^2 
    +
    \left(1 - \frac{\delta_2}{2} \right) \left\| \eps{\incu{k}} \right\|_{\mathbb{C}}^2 
    +
    \| \incv{k}\|_{\rhof\porosity}^2 +  \Delta t\,  \left\| \eps{\incv{k}} \right\|_{\porosity 2\mu_\mathrm{f}}^2 
    +
    \left\| \incp{k} \right\|_{\frac{(1-\porosity)^2}{\kappa_\mathrm{s}}}^2 \right] \\
    &\quad \leq
    \frac{1}{2} \left\| \incu{m} \right\|_{\hatsolidstabilization}^2
    +
    \frac{\delta_1 + \delta_2}{2} \left\| \eps{\incu{m}} \right\|_{\mathbb{C}}^2
    +
    \frac{\Delta t}{2} \left\| \incv{m} \right\|_{\fluidstabilization}^2
    + 
   \frac{\Delta t}{2}  
    \left\| \incp{m} \right\|_{\hatpressurestabilization}^2.
    \end{align*}
   After all, relative stability in the sense of~\eqref{convergence-criterion-stability} can be deduced for any choice for $\delta_1>0$ and $\delta_2\in(0,2)$, since $m$ and $M$ have been chosen arbitrary. By this the assertion follows. 
\end{proof}

\begin{remark}[Incompressible media]
 We note that in contrast to the alternating minimization split~\eqref{undrained-split-1}--\eqref{undrained-split-2}, the diagonally $L^2$--stabilized two-way split~\eqref{fp-s-1}--\eqref{fp-s-2} remains well defined in the extreme case of (quasi-)incompressible solid material, i.e., $\frac{(1-\porosity)^2}{\kappa_\mathrm{s}}=0$. According to the above theory, convergence is not guaranteed anymore, yet still may be possible in practice, see also examples in Section~\ref{section:numerical-tests}.
\end{remark}

We close this section with suggesting a practical set of stabilization parameters guided by the previous convergence analysis. We emphasize that one could optimize the effective stability constant in~\eqref{fp-s-stability} wrt.\ $\delta_1,\ \delta_2$, $\solidstabilization,\ \fluidstabilization,\ \pressurestabilization$; however, theoretical optimality does not necessarily result in practical optimality, cf.~\cite{Storvik2019} for an applicable discussion.

\begin{remark}[A practical set of stabilization parameters]
We assume heterogeneities of the porosity are not crucial and pretend the porosity is constant. Then it is $C_\mathrm{Korn,1}=0$ and $K_\mathrm{dr,\porosity,min}^{-1}=\frac{(1-\porosity)^2}{K_\mathrm{dr}}$, where $K_\mathrm{dr}=K_\mathrm{dr,0,min}$ denotes the standard drained bulk modulus. Moreover, we choose the values $\delta_1=\delta_2=1$ in order to balance similar terms on both sides of~\eqref{fp-s-stability} and follow the suggestion of the stability property to choose the stabilization parameters as ``small'' as possible. This results in the set
 \begin{align*}
  \solidstabilization = - \frac{\porosity^2 \permeability^{-1}}{2\Delta t},\qquad\qquad
  \fluidstabilization = \bm{0},\qquad \qquad 
  \pressurestabilization = \frac{(1-\porosity)^2}{K_\mathrm{dr}\,\Delta t},
 \end{align*}
 which leads to destabilization of the momentum equation of the solid. However, we also highlight that merely utilizing pressure stabilization and setting $\solidstabilization=\fluidstabilization=\bm{0}$ does also result in guaranteed convergence, in the style of the fixed-stress split for the quasi-static Biot equations.
\end{remark}
 
\section{Numerical tests for the convergence of the proposed splitting schemes}\label{section:numerical-tests}

The aim of this section is to assess the performance of the proposed splitting schemes, the alternating minimization split, cf.\ Section~\ref{section:undrained-split}, and the diagonally $L^2$-stabilized two-way split, cf.\ Section~\ref{section:fixed-stress},
and to compare it with the theoretical convergence results in Section~\ref{section:analysis}. In particular, we consider three test cases and 
%
perform an extensive parametric study for various choices of model parameters and stabilization values based on the above analyses, in addition to similar ad-hoc choices motivated by the analyses or experience of the closely related splitting schemes for the Biot equations.  

As test problems, we use two classic problems, the swelling~\cite{Burtschell2017effective, Burtschell201928, Vuong20151240} and footing~\cite{Adler2017,adler2017robust,Storvik2019} problems. In addition, we consider a perfusion-like problem as a reference for biomedical applications. We note that each problem is loaded on a different equation: the swelling on the fluid, the footing on the solid and the perfusion on the mass balance.

We first present a sensitivity study with respect to the physical parameters for both alternating minimization and $L^2$--stabilized splits independently based on the swelling test. Then, we provide a detailed comparison between both methods in all the described test problems in combination with Anderson acceleration. At the end of this section, we also compare the performance of the split scheme that results most effective, with a monolithic solution approach for the linearized problem, which may be considered to be the \emph{gold standard} solution strategy. This final test sheds light on the competitiveness of the proposed schemes when used for realistic scenarios.

All numerical examples have been performed using the FEniCS project~\cite{logg2012automated,alnaes2015fenics}, and convergence is measured in terms of the relative residual (for larger certainty absolute residuals are not considered).

\subsection{Definition of the test cases\label{section:numerics-test-case}}

\paragraph{The swelling test} This test consists of a 2D slab $\Omega=(0, L)^2,\ L=10^{-2}$, in absence of volume forces and simulated in the time interval $(0,1)$, with time step $\Delta t=0.1$. It is subject to an inflow $\porosity\left(2\mu_\mathrm{f} \eps\fluidvelocity - p\tensor I \right)\boldsymbol n=-p_\text{ext}\boldsymbol n, p_\text{ext}(t)=10^3(1-\exp(4t^2))$ on the left and null stress on the right, whereas above and below it uses a no-slip boundary condition $\fluidvelocity=0$. The boundary conditions for the solid are sliding on the bottom and left sides, whereas the rest of the boundary is of null traction type (see Figure \ref{fig:numerical-tests-bcs} (a)). We note that these conditions are not physical because the fluid boundary pressure should act as force on the solid as well, but we keep the proposed scenario to have this test being loaded only on the fluid equation. We have indeed tested this and observed that it presents no impact on the following study.
The following default parameters (from~\cite{Burtschell201928}) are used (unless otherwise specified): $\rho_\mathrm{f}=\rho_\mathrm{s} = 1000, \mu_\mathrm{f} = 0.035, \lambda_\mathrm{s} = 711, \mu_\mathrm{s} = 4066, \kappa_\mathrm{s} = 10^3, \permeability = 10^{-7}\tensor I, \porosity = 0.1$, all in SI units; in addition $\Omega$ is discretized using 10 elements per side. 
Let us denote by $\boldsymbol X_h^k(\Omega)$ the Lagrangian $k$-th order finite element space defined on a quasi-uniform mesh of $\Omega$ of characteristic size $h$.
Unless otherwise specified, the default finite element spaces used are: first order Lagrangian elements for the solid, $\boldsymbol V_h = \boldsymbol X_h^1(\Omega)$ and Taylor-Hood elements for the fluid-pressure system, $\boldsymbol W_h \times Q_h = \boldsymbol X_h^2(\Omega) \times X_h^1(\Omega)$, which satisfy the weighted inf-sup condition  partially but are more useful in practice \cite{BARNAFI2020}. We name this choice of elements with the shorthand notation P1/P2/P1.
Finally, a relative tolerance of $10^{-8}$ was used with respect to the $\ell^\infty$ norm of the residual, where all sub-problems are solved using GMRES with a relative tolerance of $10^{-8}$ as well, preconditioned with with an incomplete LU factorization with 3 levels of depth (ILU(3)~\cite{saad2003iterative}).


\paragraph{The footing test} This test (from~\cite{adler2017robust}) also consists of a 2D slab $\Omega=(0, L)^2,\ L=64$, simulated in the time interval $(0,1)$, with time step $\Delta t=0.01$ in absence of volume forces where half of the boundary on top $\Gamma_\text{foot} =  (16, 48) \times \{64\}$ is subject to an increasing load. More precisely, the fluid phase is subject to a \emph{no-slip} condition on $\Gamma_\text{foot}$ and null pressure in $\partial\Omega\setminus \Gamma_\text{foot}$. The boundary conditions for the solid are given by an increasing load $\boldsymbol t(\boldsymbol x, t) = (0, -10^5 t)$ on $\Gamma_\text{foot}$, homogeneous Dirichlet conditions on the bottom $\boldsymbol{u}_\mathrm{s} = \boldsymbol 0$ and null Neumann conditions everywhere else (see Figure \ref{fig:numerical-tests-bcs} (b)). The parameters used are given by: $\rho_\mathrm{f}=1000, \rho_\mathrm{s} = 500, \mu_\mathrm{f} = 10^{-3}, E = 3\cdot10^4, \nu = 0.2, \lambda_\mathrm{s} =  E \nu / ((1 + \nu) (1 - 2  \nu)), \mu_\mathrm{s} = E / (2  (1 + \nu)), \kappa_\mathrm{s} = 10^6, \permeability = 10^{-7}\tensor I, \porosity = 10^{-3}$, all in SI units, discretized using 10 elements per side, with two simple refinements performed near the footing boundary. The finite element spaces used are the ones of the swelling test, namely P1/P2/P1. Finally, a relative tolerance of $10^{-6}$ is used with respect to the $\ell^\infty$ norm of the residual.

\paragraph{The perfusion test} This test also consists of a 2D slab $\Omega=(0, L)^2,\ L=0.01$ simulated in the time interval $(0,1)$, with time step $\Delta t=0.1$. Both fluid and solid phases are subject to homogeneous Dirichlet boundary conditions on the left and homogeneous Neumann conditions elsewhere (see Figure \ref{fig:numerical-tests-bcs} (c)). We set the scalar source term $\theta = 500$, and the problem parameters are given by: $\rho_\mathrm{f}=1000, \rho_\mathrm{s} = 1000, \mu_\mathrm{f} = 0.03, E = 3\cdot10^4, \lambda_\mathrm{s} =  5\cdot10^4, R = \sqrt{E^2 + 9\lambda_\mathrm{s}^2 + 2E\lambda_\mathrm{s}}, \mu_\mathrm{s} = 0.25\,(E - 3\lambda_\mathrm{s} + R), \kappa_\mathrm{s} = 10^6, \permeability = 10^{-9}\tensor I, \porosity = 0.05$, all referring to in SI units. These mechanical parameters are obtained from \cite{shaw2013mechanical}, the remaining ones from \cite{michler2013}.  The domain is discretized using 10 elements per side, and the finite element spaces used are P1/P2/P1. A relative tolerance of $10^{-8}$ is used with respect to the $\ell^\infty$ norm of the residual. 
%
%
%
%
\begin{figure}[!ht]
    \centering
    \begin{subfigure}[c]{0.45\textwidth}
    \hspace{0.65cm}
    \includegraphics[height=4.5cm]{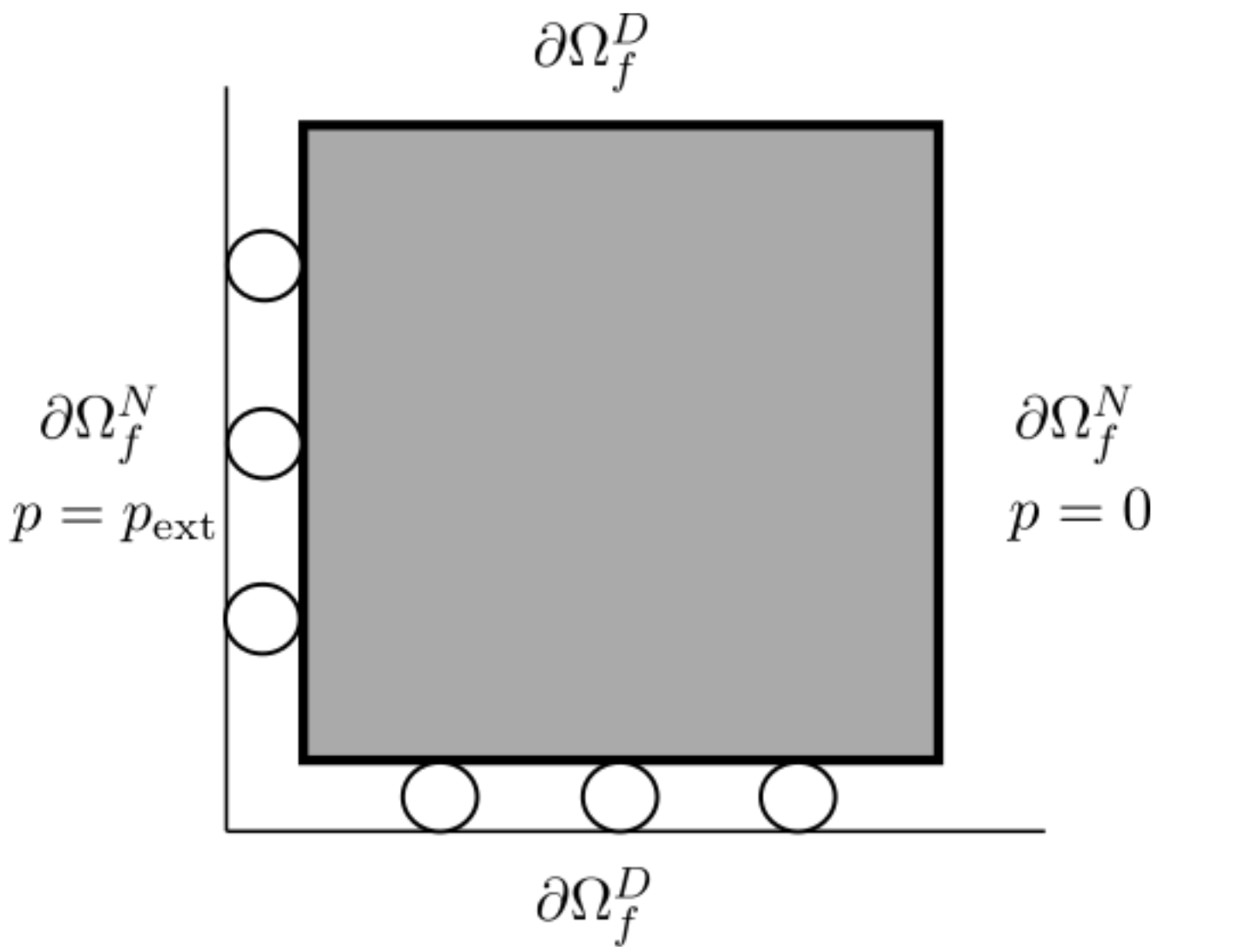}
    \caption{Swelling BC.}
    \end{subfigure}
    \begin{subfigure}[c]{0.45\textwidth}
    \hspace{1.4cm}
    \includegraphics[height=4.5cm]{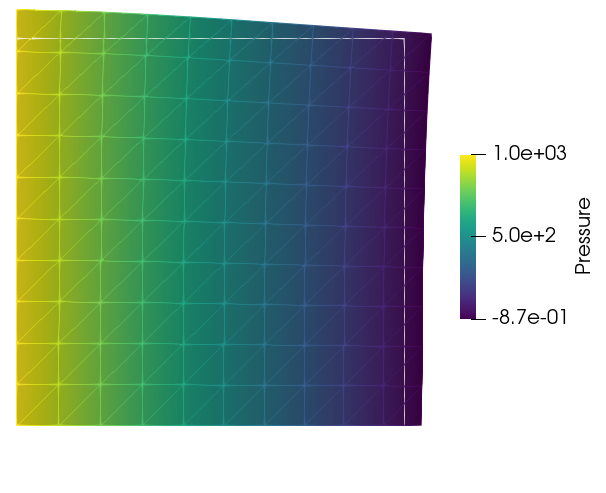}
    \caption{Swelling solution at $t=1s$.}
    \end{subfigure}
    
    \begin{subfigure}[c]{0.45\textwidth}
    \hspace{0.5cm}
    \includegraphics[height=4.5cm, trim=-1.9cm 0 0 0]{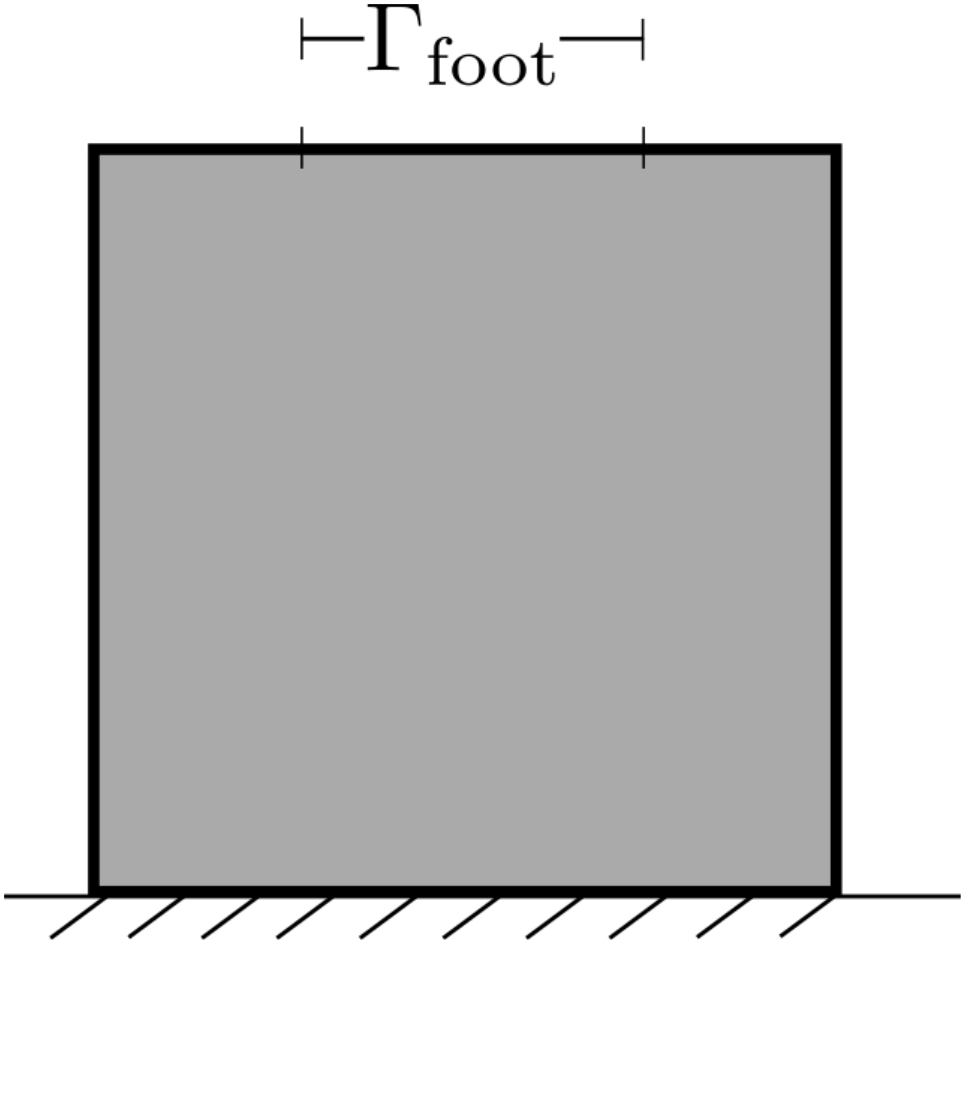} 
    \caption{Footing BC.}
    \end{subfigure}
    \begin{subfigure}[c]{0.45\textwidth}
    \hspace{1.5cm}
    \includegraphics[height=4.5cm]{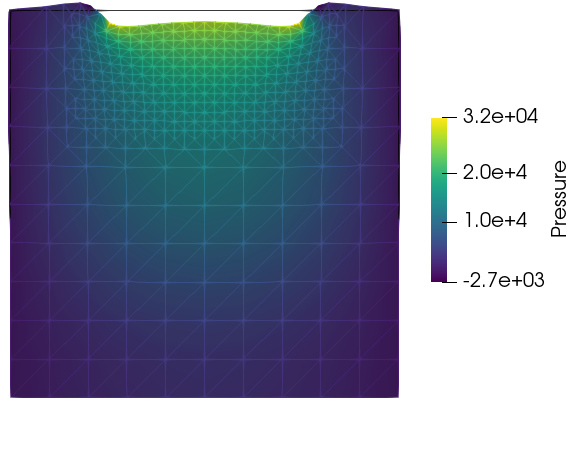}
    \caption{Footing solution at $t=1s$.}
    \end{subfigure}

    \begin{subfigure}[c]{0.45\textwidth}
    \hspace{0.75cm}
    \begin{overpic}[height=4cm, trim=0.5cm 0 0 0]{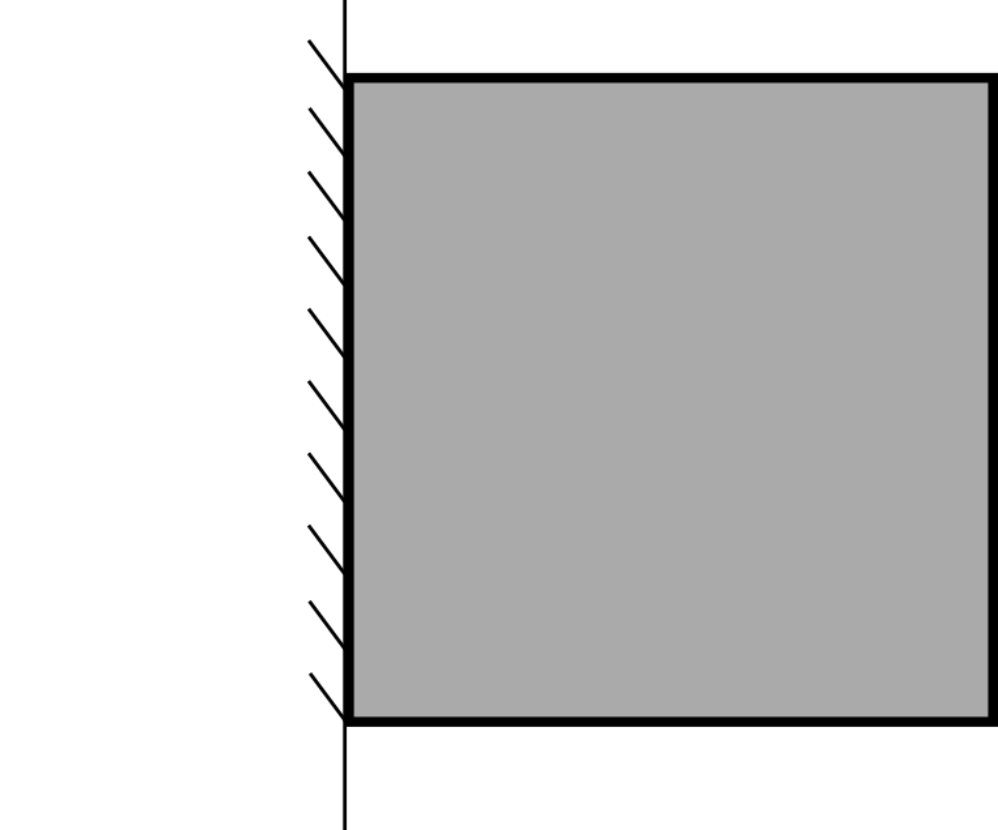}
     \put(-2.8,50){$\displacement = \bm{0}$}
     \put(-2,40){$\fluidvelocity = \bm{0}$}
     \end{overpic}
    \caption{Perfusion BC.}
    \end{subfigure}
    \begin{subfigure}[c]{0.45\textwidth}
    \hspace{1.5cm}
    \includegraphics[height=4.5cm]{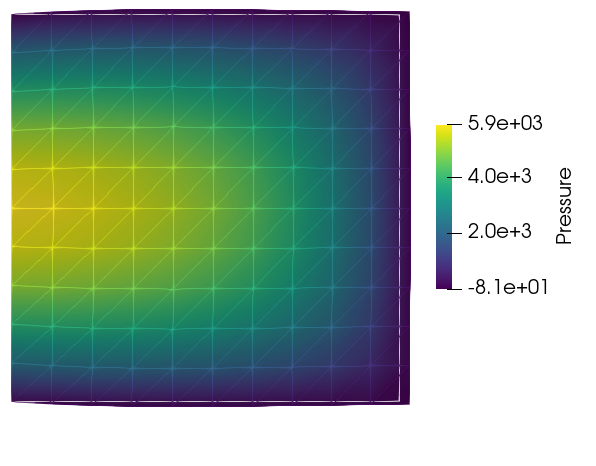}
    \caption{Perfusion solution at $t=1s$.}
    \end{subfigure}
    \caption{Boundary conditions used in the numerical tests and the corresponding solution.}
    \label{fig:numerical-tests-bcs}
\end{figure}

\subsection{Anderson acceleration}\label{section:anderson-acceleration}
One key aspect of both proposed schemes is that they can be interpreted as fixed point iterations. Although they feature in general lower convergence rates than  Newton methods, they have acquired higher interest recently, also due to the development of acceleration schemes. In particular, we focus on the Anderson acceleration, which can be interpreted as a multisecant scheme, or as a preconditioned GMRES iterative method \cite{walker2011anderson}. 
As shown later on in Tables~\ref{table:iterative:swelling-comparison},~\ref{table:iterative:footing-comparison},~\ref{table:iterative:perfusion-comparison},
acceleration techniques greatly improve the performance of the proposed split schemes, by increasing their robustness with respect to varying loading conditions and significantly reducing the iteration count. In practice, using Anderson acceleration is a necessary choice to effectively use the described split schemes in demanding scenarios.

In general, consider a vector-valued function $\vec g:\mathbb{R}^N\rightarrow \mathbb{R}^N$ and the sequence     
\begin{align*}
\vec x_{k+1} = \vec g(\vec x_k).
\end{align*}
By defining $\vec f_k =  \vec g(\vec x_k) - \vec x_{k}$, Anderson acceleration of order $m$, abbreviated by AA($m$), is given as follows: For iteration $k$, set $m_k = \min\{m, k\}$ and $\mat F_k = (\vec f_{k - m_k}, ...,\vec f_k)$. Compute $\vec \alpha^{k} = (\alpha_0^k, ..., \alpha_{m_k}^k)$ that minimizes
    \begin{equation}\label{eq:iterative:anderson-original}
        \min_{\alpha = (\alpha_0, ..., \alpha_{m_k})}  \quad \| \mat F\vec \alpha \|_2 
        \quad \text{s.t.}\quad 
        \sum_{i=0}^{m_k}\alpha_i = 1,
    \end{equation}
and then compute the next element as 
\begin{align*}
    \vec x_{k+1} = \sum_{i=0}^{m_k} \alpha^k_i \vec g(\vec x_{k - m_k + i}).
\end{align*} 
The order $m$ of the scheme is usually referred to as depth, due to the use of $m$ previous iterations. We implement this method by recasting \eqref{eq:iterative:anderson-original} as an unconstrained least-squares problem, and then invert its optimality conditions
using the QR factorization to avoid possible ill-conditioning of the normal equations \cite{saad2003iterative}.

\subsection{Numerical tests for the alternating minimization split\label{section:numerics-undrained}}

In this section we present three numerical tests on the alternating minimization split (named \emph{Alt--min} in the tables), with the aim of verifying the robustness of the scheme with respect to the parameters $N=\frac{\kappa_\mathrm{s}}{(1-\porosity)^2}, \permeability$ and highly oscillatory porosities $\porosity$. As test case we adopt the swelling test described above. We consider three varying parameters
\begin{align*}
     \kappa_\mathrm{s} \in \{ 10^k \}_{k=2}^5; \quad
     \kappa_\mathrm{f} \in \{ 10^{-k} \}_{k=9}^{12}; \quad
     \porosity = 0.1 + 0.5\,\sin^2(\ell \pi x/L), \,\ell=2,...,8;
\end{align*}
 where $L=10^{-2}$ is the side length, and the permeability is treated as a scalar for simplicity, namely $\permeability = \kappa_\mathrm{f} \boldsymbol I$. For each parameter aside of default parameters otherwise, we present the average number of splitting iterations throughout the simulation required for convergence in Table~\ref{table:iterative:undrained-sensitivity}.
 We observe that the performance of the alternating minimization split is particularly sensitive to the bulk modulus $\kappa_\mathrm{s}$, and small permeabilities make the problem much more difficult to solve. Instead, the dependence on oscillating porosity is moderate. The results are in accordance to Theorem \ref{theorem:convergence-undrained}.

 \begin{table}[!ht]
     \centering
     \begin{subtable}[b]{0.32\textwidth}
     \centering
         \begin{tabular}{ccc}\toprule
              $\kappa_\mathrm{s}$ & &\# avg. iters. \\
              \cmidrule{1-1}\cmidrule{3-3}
              $10^2$ && 8.55 \\
              $10^3$     && 15.91 \\
              $10^4$     && 64.09  \\
              $10^5$     && --  \\
              \bottomrule
         \end{tabular}
         \caption{Bulk modulus.}
     \end{subtable}
     \begin{subtable}[b]{0.32\textwidth}
     \centering
          \begin{tabular}{ccc}\toprule
              $\kappa_\mathrm{f}$ && \# avg. iters. \\
              \cmidrule{1-1}\cmidrule{3-3}
              $10^{-9}$ && 17.64 \\
              $10^{-10}$ && 73.72 \\
              $10^{-11}$ && 399.96 \\
              $10^{-12}$ && -- \\
              \bottomrule
         \end{tabular}
         \caption{Permeability.}
     \end{subtable}
     \begin{subtable}[b]{0.32\textwidth}
         \centering
         \begin{tabular}{ccc}\toprule
              $\ell$ && \# avg. iters.  \\
              \cmidrule{1-1}\cmidrule{3-3}
              2  && 23.91 \\
              4  && 54.64\\
              6  && 103.09\\
              8  && 170.64\\ 
              \bottomrule
         \end{tabular}     
         \caption{Porosity $\porosity = \porosity(\ell)$.}
     \end{subtable}
     \caption{Alt--min for the swelling test: Average iteration count for varying (a) Bulk modulus (b) Permeability and (c) Porosity. Non-convergence denoted with -- after 200 iterations for bulk modulus and 500 iterations for permeability.}
     \label{table:iterative:undrained-sensitivity}
 \end{table}
 
\subsection{Numerical tests for the diagonally \texorpdfstring{$L^2$}{L2}--stabilized split}

In this section, we study the sensitivity of the performance of the diagonally $L^2$--stabilized split (named $L^2S$ in the tables) with respect to different combinations of physical parameters. Precisely, we use the swelling test with default coefficients, and we vary the following ones
\begin{align*}
\kappa_\mathrm{s} \in\{10^k\}_{k=2}^{8};
\quad
\kappa_\mathrm{f} \in \{10^{-k}\}_{k=7}^{12};
\quad
\rhos=\rhof \in \{10^k\}_{k=2}^8.
\end{align*}
Additional tests address the influence of the ratio between the elasticity and the permeability. For this, we fix the permeability and increase the drained bulk modulus $K_\mathrm{dr} = \lambda + \frac{2\mu}{d},\ d=2$ by scaling both Lam\'e parameters by the same factor. 

The analysis in Section \ref{fp-s-split} yields the interesting fact that the solid momentum equation can be destabilized. Therefore, we compare different stabilization parameters, also ones excluded by the theory in order to investigate the theoretically suggested parameter ranges. In particular, we apply the $L^2$ stabilized two-way split~\eqref{fp-s-1}--\eqref{fp-s-2} using stabilization parameters of type
\begin{equation*}
 \solidstabilization = \bar{\beta}_{\mathrm{s}} \frac{\porosity^2\permeability^{-1}}{\Delta t},
 \qquad
 \fluidstabilization = \bar{\beta}_{\mathrm{f}} \porosity^2 \permeability^{-1},
 \qquad
 \pressurestabilization = \bar{\beta}_{\mathrm{p}} \frac{(1-\porosity)^2}{\Delta t \, K_\mathrm{dr}}
\end{equation*}
with different scaling factors $\bar{\beta}_{\mathrm{s}}, \bar{\beta}_{\mathrm{f}}, \bar{\beta}_{\mathrm{p}}$, from now on denoted by $L^2S_{\bar{\beta}_\mathrm{s},\bar{\beta}_\mathrm{f},\bar{\beta}_\mathrm{p}}$. Considered scaling factors are listed in Table~\ref{table:fp-s:variants}. Splitting iterations are terminated via the tolerance $\mathrm{tol}_\mathrm{res} = 10^{-8}$. As in the previous test, performance is measured in terms of the average number of splitting iterations throughout the entire simulation, with non-convergence established whenever a solver requires more than 200 iterations. 
 
\begin{table}[!ht]
\def\arraystretch{1.1}
\small
\centering
 \begin{tabular}{lcccll}
 \toprule
  ID &$\bar{\beta}_\mathrm{s}$  & $\bar{\beta}_\mathrm{f}$ & $\bar{\beta}_\mathrm{p}$ & Description & Covered by Thm.~\ref{theorem:l2s-convergence}\\ 
  \midrule
  $L^2S_{0,0,0}$ &0 & 0 & 0 & Unstabilized split & \xmark \\
  $L^2S_{0,0,1}$ &0 & 0 & 1 & $L^2S$ with fixed-stress-type $p$-stabilization & \cmark \\
  $L^2S_{-0.5,0,1}$&$-\frac{1}{2}$ & 0 & 1 & $L^2S$ with conservative $\displacement$-destabilization & \cmark \\
  $L^2S_{-1,0,1}$&$-1$ & 0 & 1 & $L^2S$ with aggressive $\displacement$-destabilization & \xmark \\ 
  \bottomrule
 \end{tabular}
 \caption{Considered stabilization settings in the context of the diagonally $L^2$--stabilized split. \label{table:fp-s:variants}}
\end{table}

Although the analysis, cf.\ Thm.~\ref{theorem:l2s-convergence}, does not reveal any dependence on the particular discretization, it is developed under the underlying assumption that the discrete problems are uniquely solvable. To investigate potential effects of stability of the function spaces onto the stability of the splitting, we consider progressively unstable approximation spaces, namely P1/P2/P1 and P1/P1/P1 elements for displacement, velocity and pressure, respectively.

\subsubsection{Dependence on solid bulk} 
In Table~\ref{tab:fp-s:bulk}, the iteration counts for varying $\kappa_\mathrm{s}$ are displayed. We observe that for P1/P1/P1 elements no set of stabilization parameters enables convergence for larger $\kappa_\mathrm{s}$; we note that for increasing $\kappa_\mathrm{s}$, the uniform stability of the fluid-pressure system is lost. In contrast, the use of P1/P2/P1 elements adds uniform stability to the discretization and finally also uniform robustness to any of the stablized splittings. For a non-dominating $\displacement-\fluidvelocity$ coupling, destabilization of the solid momentum equation does not make a big difference. 

\begin{table}[!ht]
\footnotesize
\centering
     \begin{tabular}{ccccccccccc}\toprule
     && \multicolumn{4}{c}{P1/P1/P1 elements}& & \multicolumn{4}{c}{P1/P2/P1 elements}\\
          $\kappa_\mathrm{s}$ & &
          $L^2S_{0,0,0}$ & 
          $L^2S_{0,0,1}$ & 
          $L^2S_{-0.5,0,1}$ &
          $L^2S_{-1,0,1}$ & &
          $L^2S_{0,0,0}$ & 
          $L^2S_{0,0,1}$ & 
          $L^2S_{-0.5,0,1}$ &
          $L^2S_{-1,0,1}$\\
          \cmidrule{1-1}\cmidrule{3-6}\cmidrule{8-11}
$10^{2} $ && 6.73  & 6.0   & 5.91   & {5.82}  &&  6.73  & 6.73 & {6.36}  & 6.36 \\
$10^{4} $ && {13.27} & 22.81 & 22.82  & 22.91 &&  13.18 & 7.0   & {6.73} & 6.91 \\
$10^{6} $ && --    & --     & --    & --             &&  14.0  & 7.09  & {6.82} & 7.0 \\
$10^{8} $ && --    & --     & --    & --             &&  14.09 & 7.09  & {6.82} & 7.0 \\
          \bottomrule
     \end{tabular}
     \caption{Average iteration count of the $L^2$--stabilized solvers for a varying $\kappa_\mathrm{s}$ in the swelling test. Non-convergence denoted by --.}
     \label{tab:fp-s:bulk}
 \end{table}

 \subsubsection{Dependence on permeability\label{section:l2s-analysis-swelling-permeability-study}} 
 In Table~\ref{tab:fp-s:permeability}, the iteration counts for varying $\permeability=\kappa_\mathrm{f} \bm{I}$ are displayed. Here, a maximal count of 500 splitting iterations is used for better understanding the dependence on the permeability. Lower permeability makes the problem more difficult to solve.
 The reasons for this are: (i) decreasing the permeability leads to ill-conditioning of the $\displacement$-$\fluidvelocity$ block; and (ii) for lower permeabilities the ellipticity of the $\displacement$-$\fluidvelocity$ block loses its dominance, and instead the $L^2$-type contribution has a much bigger influence.

 Destabilization of $\displacement$ seems to effectively address the first issue. In fact, it results in significantly improving the performance, compared to mere $p$-stabilization, which alone fails to lead to unconditional robustness. This, on the one hand, nicely verifies the theory in Thm.~\ref{theorem:l2s-convergence}. On the other hand, it indicates that suitable destabilization successfully imitates approximating the Schur complement of the $L^2$-type contribution of the $\displacement$-$\fluidvelocity$ block; the comparison of conservative and aggressive destabilization illustrates the potential gain but also sensitivity of destabilization. Since $L^2$-stabilization of the mass conservation equation does not address the $L^2$-type contribution of the $\displacement$-$\fluidvelocity$ block at all, unconditional robustness cannot be expected without an additional differently scaled stabilization approach, ultimately mitigating the second issue.
 
 Comparing the results for the P1/P1/P1 and P1/P2/P1 discretizations, we note that inf-sup stability in the fluid allows for a significant  improvement on the performance. Also, in contrast to the unstable case, destabilizing the $\displacement$ equations greatly improves performance.


 \begin{table}[!ht]
 \footnotesize
\centering
     \begin{tabular}{ccccccccccc}\toprule
     && \multicolumn{4}{c}{P1/P1/P1 elements}& & \multicolumn{4}{c}{P1/P2/P1 elements}\\
          $\kappa_\mathrm{f}$ & &
          $L^2S_{0,0,0}$ & 
          $L^2S_{0,0,1}$ & 
          $L^2S_{-0.5,0,1}$ &
          $L^2S_{-1,0,1}$ & &
          $L^2S_{0,0,0}$ & 
          $L^2S_{0,0,1}$ & 
          $L^2S_{-0.5,0,1}$ &
          $L^2S_{-1,0,1}$\\
          \cmidrule{1-1}\cmidrule{3-6}\cmidrule{8-11}
$10^{-7}$ && 10 & 8.18     & 8.18   & 8.36 && 10         & 6.36    & 6.27   & 6.64  \\
$10^{-8}$ && 12 & 9.91     & 9.82   & 9.91 &&11.91      & 9       & 8.45   & 8\\
$10^{-9}$ && 15.09 & 15.09 & 12.36  & 11.18  && 15.45      & 15.36   & 12.55  & 9.55  \\
$10^{-10}$ && 67.18 & 67.28  & 40   & 55 && 74.64     & 74.73   & 44.27  & 19.27\\
$10^{-11}$ && 347.55 & 348.18 & 194  & --&& 419.64    & 420.45  & 232    & -- \\
$10^{-12}$ && -- & --  & --   & -- && --        & -- & -- & --  \\\bottomrule
     \end{tabular}
          \caption{Average iteration count of the $L^2$--stabilized solvers for a varying $\permeability$ in the swelling test. Non-convergence denoted by -- (more than 500 iterations in this case).\label{tab:fp-s:permeability}}
  \end{table}

We note that this method is very sensitive to low permeabilities.
In previous studies, e.g.~\cite{Both2018}, Anderson acceleration has been shown to successfully increase robustness of stabilized iterative solvers. So we present the iteration counts for the same test but using Anderson acceleration with a depth of 5 in Table~\ref{tab:fp-s:permeability-accelerated}. We note that not only there is a significant decrease in the number of splitting iterations required (up to ca.\ 80\% for very low permeabilities), but it also enables the convergence of configurations which have previously not converge, again verifying previous observations. As long as the permeability is not too low, again aggressive $\displacement$ stabilization leads to the best performance. 
 
\begin{table}[!ht]
\footnotesize
\centering
     \begin{tabular}{ccccccccccc}\toprule
     && \multicolumn{4}{c}{P1/P1/P1 elements}& & \multicolumn{4}{c}{P1/P2/P1 elements}\\
          $\kappa_\mathrm{f}$ & &
          $L^2S_{0,0,0}$ & 
          $L^2S_{0,0,1}$ & 
          $L^2S_{-0.5,0,1}$ &
          $L^2S_{-1,0,1}$ & &
          $L^2S_{0,0,0}$ & 
          $L^2S_{0,0,1}$ & 
          $L^2S_{-0.5,0,1}$ &
          $L^2S_{-1,0,1}$\\
          \cmidrule{1-1}\cmidrule{3-6}\cmidrule{8-11}
$10^{-7}$ && 5.9     & {5.73}    & 6      & 6      && 5.73      & 4.91  & 4.91     & {4.91}  \\
$10^{-8}$ && {7}       & 7.27    & 7.27   & 7.09   && 6.91      & 6.91    & 6.64   & {5.91}  \\
$10^{-9}$ && 10.36   & 10      & 8.91   & {8.91}   && 10.45     & 10      & 9      & {7.09} \\
$10^{-10}$ && 18.91  & 18.09   & 14.91  & {12}     && 18        & 20.09   & 15.73  & {10}  \\
$10^{-11}$ && 43.55  & 45.18   & 33.73  & {26.18}  && 56.82     & 53.18   & 38.91  & {18.82} \\
$10^{-12}$ && {107.09} & 112.73  & 121.55  & --    && 140.73    & 117.36  & {95.64}  & 280.82\\
\bottomrule
     \end{tabular}
     \caption{Average iteration count of the $L^2$--stabilized solvers for a varying $\permeability$ in the swelling test using Anderson acceleration with depth 5. Non-convergence denoted by -- (more than 500 iterations in this case).}
     \label{tab:fp-s:permeability-accelerated}
 \end{table} 
 
\subsubsection{Dependence on densities} 
In Table~\ref{tab:fp-s:p1p1p1:density}, the iteration counts for varying $\rhos=\rhof$ are displayed. We observe that for very large densities the problem starts to become more difficult  to solve. To explain, increasing densities (merely) raise the second issue mentioned in Section~\ref{section:l2s-analysis-swelling-permeability-study}; in particular, as expected, destabilizing the solid equation does not yield any improvement, in contrast to the previous test. Iteration counts are identical for P1/P1/P1 and P1/P2/P1 elements. Thus, only the former is presented.

\begin{table}[!ht]
    \centering
     \begin{tabular}{cccccc}\toprule
     &&\multicolumn{4}{c}{
     \centering P1/P1/P1 elements}\\
          $\rhos=\rhof$ &&
          $L^2S_{0,0,0}$ & 
          $L^2S_{0,0,1}$ & 
          $L^2S_{-0.5,0,1}$ &
          $L^2S_{-1,0,1}$  \\
          \cmidrule{1-1}\cmidrule{3-6} 
$10^{2}$ && -- & 4.0 & 3.9 & 4.0 \\
$10^{4}$ && -- & 4.0 & 3.9 & 4.0 \\
$10^{6}$ && -- & 4.0 & 4.0 & 4.0 \\
$10^{8}$ && -- & 18.4 & 18.7 & 19.4 \\\bottomrule
     \end{tabular}
     \caption{Average iteration count of the $L^2$--stabilized solvers for a varying $\rhos=\rhof$ in the swelling test. Non-convergence denoted by --.}
     \label{tab:fp-s:p1p1p1:density}
 \end{table}

\subsubsection{Dependence on drained bulk modulus} 
In Table~\ref{tab:fp-s:bulk-modulus}, the iteration counts for varying $K_\mathrm{dr}$ (with same Poisson ratio) are displayed. We observe that lower drained bulk modulus is associated to higher iteration counts. 
This can be explained along the lines of the discussion of the dependence on the permeability, cf.\ Section~\ref{section:l2s-analysis-swelling-permeability-study}, since a lower drained bulk modulus leads to dominance of the $L^2$-type contribution of the $\displacement$-$\fluidvelocity$ block. Therefore, as expected, (aggressive) destabilization is beneficial. Additionally, a lower drained bulk modulus leads to a stronger coupling strength, and in accordance to Theorem~\ref{theorem:l2s-convergence}, to a deteriorating convergence rate. Again, inf-sup stability of the discretization of the fluid-pressure coupling enables slightly improved results, especially for low bulk modulus.

\begin{table}[!ht]
\footnotesize
\centering
     \begin{tabular}{ccccccccccc}\toprule
     && \multicolumn{4}{c}{P1/P1/P1 elements}& & \multicolumn{4}{c}{P1/P2/P1 elements}\\
          $K_\mathrm{dr}$ & &
          $L^2S_{0,0,0}$ & 
          $L^2S_{0,0,1}$ & 
          $L^2S_{-0.5,0,1}$ &
          $L^2S_{-1,0,1}$ & &
          $L^2S_{0,0,0}$ & 
          $L^2S_{0,0,1}$ & 
          $L^2S_{-0.5,0,1}$ &
          $L^2S_{-1,0,1}$\\
          \cmidrule{1-1}\cmidrule{3-6}\cmidrule{8-11}
          47.77 && --   & --   & --   & --    && --    & 26    & 19.18 & 16.64 \\
          477.7 && --   & 30   & 30   & 30    && --    & 10.82 & 9.91  & 9.91 \\
          4777  && 10   & 8.18 & 8.18 & 8.36  && 10.27 & 6.73  & 6.73  & 6.27 \\
         47770  && 5.82 & 4.82 & 4.91 & 5.18  && 6.73  & 5.73  & 5.73  & 5.64 \\ \bottomrule
        \end{tabular}
    \caption{Average iteration count of the $L^2$--stabilized solvers for a varying $K_\mathrm{dr}$ in the swelling test. Non-convergence denoted by --. \label{tab:fp-s:bulk-modulus}}
\end{table} 

%
%




\subsection{Comparison of the alternating minimization and \texorpdfstring{$L^2$}{L2}--stabilized splits}

The previous two sections allow for a first comparison of the two proposed schemes. In particular, two conclusions on the respective limitations can be made: (i) for increasing solid bulk modulus, the alternating minimization split quickly deteriorates, whereas the $L^2$-stabilized split remains robust; and (ii) for lower permeabilities, the performance of both schemes deteriorates, but the alternating minimization split in fact better handles the limit of very low permeabilities.

In this section, we continue the comparison of the two proposed schemes, now based on all the three suggested test cases with the parameters given in their description, enjoying different problem characteristics. 
The focus of the following study will also be to assess the impact of actual inf-sup stability, given for a Taylor-Hood like P2/P2/P1 discretization, opposed to the previously considered P1/P2/P1 discretization. Moreover, having observed the improving effect of Anderson acceleration in Section~\ref{section:l2s-analysis-swelling-permeability-study}, we follow this lead and also investigate the performance of the accelerated splits, this time also for the alternating-minimization. We also consider only the $L^2S_{-0.5,0,1}$ as it is the one suggested by the analysis and it exhibits an overall more robust performance.

For the swelling test, we additionally consider two bulk moduli, $\kappa_\mathrm{s}\in\{10^4, 10^8\}$. Results are presented in Table~\ref{table:iterative:swelling-comparison}. We observe that the inf-sup stability of the displacement plays no role, and the diagonally $L^2$--stabilized split proves very robust in all the tested scenarios, performing significantly better than the alternating minimization split. For the first, Anderson acceleration barely leads to improvement due to already low iteration counts; for the latter convergence can be significantly accelerated for the lower bulk modulus. For high bulk modulus, not even Anderson acceleration enables convergence.

\begin{table}[!ht]
\def\arraystretch{1}
    \centering
    \begin{tabular}{ccccccccccc}\toprule 
    && &&\multicolumn{3}{c}{P1/P2/P1}&&\multicolumn{3}{c}{P2/P2/P1}\\
    Method && $\kappa_\mathrm{s}$ &&  None & AA(1)  & AA(5) && None & AA(1)  & AA(5)\\
    \cmidrule{1-1}\cmidrule{3-3}\cmidrule{5-7}\cmidrule{9-11}
    Alt--min    && $10^4$ &&64.09 & 38.27   & 21  && 66.82 & 39.64  & 21.82\\
    $L^2S_{-0.5,0,1}$   && $10^4$    && 6.73  & 5.0    & 4.9 && 6.55  & 4.0   & 4.9 \\ 
    \cmidrule{1-1}\cmidrule{3-3}\cmidrule{5-7}\cmidrule{9-11}
    Alt--min    && $10^8$ && --    & --    & --   && --  & --   & --   \\
    $L^2S_{-0.5,0,1}$ && $10^8$      && 6.82  & 5.09    & 4.91 && 6.64  & 5.0   & 4.91 \\\bottomrule
    \end{tabular}
    \caption{Average iteration count for all tested scenarios in the swelling test, averaged over 5 time steps for $\kappa_\mathrm{s}\in\{10^4,10^8\}$. \textit{None} stands for the plain splits; AA($m$) stands for additional application of Anderson acceleration with depth $m$.}
\label{table:iterative:swelling-comparison}
\end{table}

We present the results of the footing test in Table \ref{table:iterative:footing-comparison}. We note that in this test the alternating minimization scheme exhibits lower iteration counts. Its success can be explained by the lower bulk modulus used, and instead the initial failure of the $L^2$--stabilized scheme is due to the permeability, which is very low. This case presents localized displacements at $\Gamma_\mathrm{foot}$, which are more affected by numerical locking, which justifies the increased iteration count in the case of the P2/P2/P1 discretization.

\begin{table}[!ht]
    \centering
    \begin{tabular}{ccccccccc}\toprule 
    &&\multicolumn{3}{c}{P1/P2/P1}&&\multicolumn{3}{c}{P2/P2/P1}\\
    Method && None & AA(1)  & AA(5) && None & AA(1)  & AA(5)\\
    \cmidrule{1-1}\cmidrule{3-5}\cmidrule{7-9}
    Alt--min    && 17.92 & 8.96  & 7.4  && 73.44 & 25.4   & 16.9  \\
    $L^2S_{-0.5,0,1}$         && --  & -- & 28.98  && --   & -- & 52.42 \\\bottomrule
    \end{tabular}
    \caption{Average iteration count for all tested scenarios in the footing test. \textit{None} stands for the plain splits; AA($m$) stands for additional application of Anderson acceleration with depth $m$.}
\label{table:iterative:footing-comparison}
\end{table}


The results of the perfusion test are presented in Table \ref{table:iterative:perfusion-comparison}. The behavior of this test is similar to the swelling one, with the $L^2$--stabilized split exhibiting a robust performance, which is further improved by the use of acceleration. The alternating minimization split instead presents difficulties in attaining convergence without acceleration, which can be explained by the use of a large bulk modulus. Similarly to the swelling test, the inf-sup stability of the displacement effectively plays no role.

\begin{table}[!ht]
    \centering
    \begin{tabular}{ccccccccc}\toprule 
    &&\multicolumn{3}{c}{P1/P2/P1}&&\multicolumn{3}{c}{P2/P2/P1}\\
    Method && None & AA(1)  & AA(5) && None & AA(1)  & AA(5)\\
    \cmidrule{1-1}\cmidrule{3-5}\cmidrule{7-9}
    Alt--min    && --   & 111.64 & 51.45  && --   & 134.36 & 50.18  \\
    $L^2S_{-0.5,0,1}$  && 18.36 & 10.27  & 9 && 14.64 & 9.36   & 8.09 \\\bottomrule
    \end{tabular}
    \caption{Average iteration count for all tested scenarios in the perfusion test. \textit{None} stands for the plain splits; AA($m$) stands for additional application of Anderson acceleration with depth $m$. \label{table:iterative:perfusion-comparison}}

\end{table}

%
%


\subsection{Comparison of splitting versus monolithic approaches}
In this section we present a comparison, in terms of computational time, between the proposed splitting schemes and a monolithic approach. 
We consider the swelling test and we choose the $L^2S_{-0.5,0,1}$ (labelled $L^2S$) as it yields the best performance for this problem.
The default stopping criterion for GMRES iterations is adopted for the monolithic scheme, with a relative tolerance equal to $10^{-8}$.
For the splitting scheme, the convergence tests for the linear system solved at each iteration is slightly relaxed, up to $10^{-6}$,
but the (relative) tolerance of the stopping criterion for the iterative splitting scheme is also set to $10^{-8}$, on the $\ell^\infty$ norm of the residual.
We compare the computational cost, measured by the average wall time per time step, calculated on a sequence of five consecutive time steps. 
Both formulations are solved using P1/P2/P1 finite elements, and the number of degrees of freedom is controlled by the number of nodes on each side of the domain.

The results of the comparison are reported in Table~\ref{tab:iterative-wall-time}.
The iterative schemes exhibit a better scaling with respect to the number of degrees of freedom.
In particular, for problems with over $10^5$ degrees of freedom (given by using 100 or more elements per side on the square domain) the wall time of the split scheme is consistently lower than of the monolithic approach. Also, the ratio between both solution times decreases monotonically with respect to the degrees of freedom as shown in the last column of the table, meaning that in this test case the superiority of iterative splitting schemes increases with the discrete problem size, which makes them a competitive solution strategy for addressing realistic scenarios, especially when considering tailored, possibly scalable preconditioners for the single subproblems.

\begin{table}[!ht]
    \centering
    \begin{tabular}{ccccccc}
        \toprule Nodes per side && \texttt{dofs} && $L^2S$ [s] & Monolithic [s] & ratio ($L^2S$ / Mono.)\\
        \cmidrule{1-1}\cmidrule{3-3} \cmidrule{5-7}
        50 && 28205   && 3.08   & 1.92 & 1.6042\\
        100 && 111405 && 11.62  & 15.61 & 0.7444\\
        150 && 249605 && 31.94  & 46.57 & 0.6858\\
        200 && 442805 && 61.79  & 128.49 & 0.4809\\
        250 && 691005 && 125.04 & 254.93 & 0.4905\\
        300 && 994205 && 196.97 & 569.36 & 0.3459\\ \bottomrule
    \end{tabular}
    \caption{Wall time [s] of the different approaches for increasing number of degrees of freedom.}
    \label{tab:iterative-wall-time}
\end{table}

\section{Discussion and Conclusions\label{section:conclusion}}
 
In this work we have developed splitting schemes for the linearized poromechanics problem studied in \cite{Burtschell201928,BARNAFI2020}, namely the alternating minimization split and the diagonally $L^2$--stabilized split. As the choice of a splitting scheme strongly depends on the application of interest, due to the strong dependence that the performance of each scheme has on the parameters, we tested the proposed methods on several benchmark problems.

The conclusions of this work arise from both theoretical analysis and numerical experiments. From the standpoint of theoretical convergence properties, we observe that the effectiveness of a splitting scheme hinges on the assumptions used for the convergence analysis and the corresponding stabilization, if necessary. For instance, the alternating minimization scheme requires the algebraic inversion of the pressure, so it can be expected for it to deteriorate whenever this operation is not admissible ($(1-\phi)/\kappa_\mathrm{s}\rightarrow \infty$). The diagonally $L^2$--stabilized split can be interpreted as an approximate Schur complement method, where the $L^2$-type contributions are not considered. This implies that it can be expected for such $L^2$--stabilized schemes to present difficulties converging whenever the $L^2$-type contributions are dominant, meaning small permeability or large densities. The analysis also provides the interesting possibility of destabilizing the solid momentum equation in the diagonally $L^2$--stabilized scheme. 

Such trends are confirmed by numerical experiments. The alternating minimization scheme performs very well in compressible scenarios but its convergence rate quickly deteriorates as the bulk modulus increases. The diagonally $L^2$--stabilized split instead is robust with respect to the bulk modulus, so it should be preferred in (quasi-)incompressible regimes. The numerical experiments also confirm that the destabilization of the solid momentum equation yields good improvements of the convergence rate.
Neither of the schemes is capable of handling large densities or small permeabilities -- enhanced splitting schemes which successfully incorporate the $L^2$-type contribution in the \mbox{displacement--fluid} velocity block are a topic of future research. Still, an improvement for the low permeability scenario can be seen by using inf-sup stable elements for the fluid-pressure block. This is an interesting property to be investigated, as it does not emerge in the analysis.

We have strengthened our splitting schemes with Anderson acceleration, which is a general method to improve the convergence of fixed-point iterations. It does not only improve the convergence of all methods tested, but it also enables convergence in scenarios in which it previously would not converge. Another feature of Anderson acceleration, particularly relevant in this framework, is that it reduces the influence of the stabilization parameters. This is indeed a fundamental aspect, as the user-defined choice and tuning of parameters represent a drawback of the presented methods. 

Finally, we have compared the diagonally $L^2$--stabilized split with a monolithic approach applied to the linearized problem. This study shows that for a sufficiently large size of the discrete problem, the iterative splitting approach is a competitive choice. Such methods may then be rightfully considered as effective options for solving realistic poromechanics problems applied to soft materials. Further investigations considering practical biomedical applications will be performed in the future.

\section*{Acknowledgements}
PZ has been supported by the Italian research project MIUR PRIN17 2017AXL54F “Modeling the heart across the scales: from cardiac cells to the whole organ”;
NB and AQ received funding from the European Research Council (ERC) under the European Union’s Horizon 2020 research and innovation programme (grant agreement No 740132); JWB and FAR have in part been supported by the Research Council of Norway (RCN) Project 250223; in addition, JWB has in part been supported by the FracFlow project funded by Equinor through Akademiaavtalen.

\begin{center}
\raisebox{-.5\height}{\includegraphics[width=.15\textwidth]{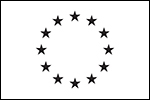}}
\hspace{2cm}
\raisebox{-.5\height}{\includegraphics[width=.15\textwidth]{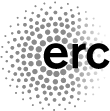}}
\end{center}


\end{document}